\newcommand{\R}{\mathbb{R}}
\def\1{\raisebox{2pt}{\rm{$\chi$}}}
\newcommand{\Lip}{\operatorname{Lip}}
\theoremstyle{plain}
\newtheorem{definition}{Definition}[]
\newtheorem{def-thm}{Definition-Theorem}[]
\newtheorem*{proposition ast}{Proposition \ref{Thm X_I charac}*}
\newtheorem{theorem}{Theorem}
\newtheorem{corollary}{Corollary}
\newtheorem{lemma}{Lemma}
\newtheorem{remark}{Remark}
\theoremstyle{definition}
\theoremstyle{remark}
\begin{document}

\title[Reachable set characterization]{Reachable set for Hamilton-Jacobi equations with non-smooth Hamiltonian and scalar conservation laws}

\begin{abstract}
We give a full characterization of the range of the operator which associates, to any initial condition, the viscosity solution at time $T$ of a Hamilton-Jacobi equation with convex Hamiltonian. Our main motivation is to be able to treat the case of convex Hamiltonians with no further regularity assumptions. 
We give special attention to the case $H(p) = |p|$, for which we provide a rather geometrical description of the range of the viscosity operator by means of an interior ball condition on the sublevel sets.
From our characterization of the reachable set, we are able to deduce further results concerning, for instance,  sharp regularity estimates for the reachable functions, as well as structural properties of the reachable set. 
The results are finally adapted to the case of scalar conservation laws in dimension one.
\end{abstract}

\author{Carlos Esteve-Yag\"ue}
\thanks{AMS 2020 MSC: 35F21, 35F25, 49L25 \\
\textit{Keywords: Hamilton-Jacobi equation,  inverse design problem,  reachable set} \\
\textbf{Funding:} 
This project has received funding from the European Research Council (ERC) under the European Union’s Horizon 2020 research and innovation programme (grant agreement NO: 694126-DyCon),  the Alexander von Humboldt-Professorship program, the European Unions Horizon 2020 research and innovation programme under the Marie Sklodowska-Curie grant agreement No.765579-ConFlex, the Transregio 154 Project ‘‘Mathematical Modelling, Simulation and Optimization Using the Example of Gas Networks’’, project C08, of the German DFG, the Grant MTM2017-92996-C2-1-R COSNET of MINECO (Spain) and the Elkartek grant KK-2020/00091 CONVADP of the Basque government. \\
\textbf{Acknowledgements:} 
The authors would like to thank Borjan Geshkovski for his question during a seminar at UAM, which inspired the results presented in this paper.
}
\address {Carlos Esteve-Yag\"ue \newline \indent
{Department of Applied Mathematics and Theoretical Physics, \newline \indent
University of Cambridge,}
\newline \indent
{Wilberforce Road, Cambridge, CB3 0WA, United Kingdom}
}
\email{\texttt{ce423@cam.ac.uk}}

\author{Enrique Zuazua}
\address{Enrique Zuazua \newline \indent
{Chair for Dynamics,Control and Numerics -  Alexander von Humboldt-Professorship
\newline \indent
Department of Data Science, \newline \indent
Friedrich-Alexander-Universit\"at Erlangen-N\"urnberg}
\newline \indent
{91058 Erlangen, Germany}
\newline \indent \hspace{2.5cm} \text{and} \newline \indent
{Chair of Computational Mathematics, Fundación Deusto}
\newline \indent
{Av. de las Universidades, 24}
\newline \indent
{48007 Bilbao, Basque Country, Spain}
\newline \indent \hspace{2.5cm} \text{and} \newline \indent
{Departamento de Matemáticas, \newline \indent
Universidad Autónoma de Madrid,}
\newline \indent
{28049 Madrid, Spain}
}
\email{\texttt{enrique.zuazua@fau.de}}

\date{\today}

\maketitle

\section{Introduction}

We consider first-order Hamilton-Jacobi equations of the form
\begin{equation}
\label{HJ intro}
\left\{
\begin{array}{ll}
\partial_t u + H(\nabla_x u) = 0  & \text{in} \ (0,T) \times \R^N, \\
\noalign{\vspace{2pt}}
u (0, x ) = u_0 (x) & \text{in} \ \R^N,
\end{array}\right.
\end{equation}
where $N\geq 1$, $u_0 \in \Lip(\R^N)$, and the Hamiltonian
$H: \R^N  \longrightarrow \R$ is a given convex function,
with no further regularity assumptions.
It is well-known that the initial-value problem \eqref{HJ intro} is well-posed in the sense of viscosity solutions \cite{crandall1983viscosity,lions1982generalized}.
For any given positive time $T>0$, the main goal of this work is to give a full characterization of the range of the operator 
\begin{equation}\label{forward viscosity operator intro}
\begin{array}{cccc}
S_T^+ : & \Lip(\R^N) & \longrightarrow & \Lip(\R^N) \\
 & u_0 &\longmapsto & u(T,\cdot),
\end{array}
\end{equation}
which associates, to any initial condition, the viscosity solution at time $T$ of the equation \eqref{HJ intro}.

In what follows, the range of the operator $S_T^+$ will be referred to as \emph{the reachable set}, and will be denoted by
\begin{equation}
\label{reachable set}
\mathcal{R}_T := \{ u_T\in \Lip(\R^N) \, : \ \exists u_0 \in \Lip(\R^N) \ \text{such that} \ S_T^+ u_0 = u_T  \} \subset \Lip (\R^N) .
\end{equation}
The problem of characterizing $\mathcal{R}_T$ can be seen as a controllability problem in which the dynamics are governed by the PDE in \eqref{HJ intro}, and the control is the corresponding initial condition.
The characterization of the reachable set for evolutionary equations such as \eqref{HJ intro} is important when addressing the inverse problem of reconstructing the initial condition from an observation of the solution at some positive time $T>0$.
This inverse problem is well-known to be highly ill-posed due to the lack of regularity of the solutions, which gives raise to the loss of backward uniqueness \cite{colombo2020initial,esteve2020inverse,liard2021initial} (multiple initial conditions result in the same solution after some time).
Moreover, in real-life applications, the measurements of the solution are usually noisy, and it is often the case that no initial condition is compatible with the given observation. Hence, when addressing this inverse-design problem, the first step is to construct a reachable function which is as close as possible to the given noisy observation.
This problem can be formulated as a minimum squares problem problem of the form
$$
\underset{\varphi_T\in \mathcal{R}_T}{ \operatorname{minimize}} \| \varphi_T(\cdot) - u_T(\cdot) \|_{L^2}^2,
$$
and is studied in \cite{esteve2021differentiability} for convex smooth Hamiltonians.
Having a good characterization of $\mathcal{R}_T$ is obviously of great interest in order to determine whether existence and uniqueness of minimizers may hold or not, as well as to design optimization algorithms to find a good approximation of the minimizer $\varphi_T^\ast\in \mathcal{R}_T$.

When $H$ is smooth and uniformly convex, i.e.
\begin{equation}
\label{hyp smooth hamiltonian}
H\in C^2 (\R^N) \quad \text{and} \quad
D^2 H(p) \geq c\, I_N \qquad \text{for some} \  c>0,
\end{equation}
the reachable set $\mathcal{R}_T$ is well-studied, and its characterization can be addressed by utilizing semiconcavity\footnote{We recall that a function $f:\R^N \to \R$ is said to be semiconcave if there exists a constant $c\in \R$ such that the function $x\mapsto f(x) - c|x|^2$ is concave.} estimates.
More precisely, it is well-known that  a necessary condition for $u_T\in \mathcal{R}_T$ is given by the following inequality\footnote{Here, $D^2$ stands for the Hessian matrix operator, and the inequality is understood in the usual partial order of symmetric matrices, i.e. $A\leq B$ if and only if $B-A$ is semidefinite positive.} (see \cite{esteve2020inverse,lions2020new})
\begin{equation}
\label{semiconcavity estimate new lions soug}
D^2 u_T  \leq \dfrac{ (D^2H(\nabla u_T))^{-1}}{T} \qquad \text{in} \quad \R^N,
\end{equation}
which is understood in the sense of viscosity solutions.
Moreover, for the one-dimensional case in space, and for quadratic Hamiltonians in any space dimension, it is proven in  \cite[Theorem 2.2]{esteve2020inverse} that the semiconcavity inequality \eqref{semiconcavity estimate new lions soug} is actually optimal, in the sense that \eqref{semiconcavity estimate new lions soug} is equivalent to $u_T\in \mathcal{R}_T$.

In this work, we aim to give similar results for the case when $H:\R^N\longrightarrow \R$ does not  fulfill the hypotheses \eqref{hyp smooth hamiltonian}, and is merely assumed to be a convex function.
In this general context,  where the Hamiltonian is neither smooth nor strictly convex, the viscosity solutions cannot be ensured to be semiconcave, and the (one-sided) regularizing effect of the equation  \eqref{HJ intro} can no longer be expressed by means of differential inequalities such as \eqref{semiconcavity estimate new lions soug}.
Nonetheless, we are still able to give a full characterization of the reachable set $\mathcal{R}_T$ by introducing a global condition, which is based on a family of test functions constructed by means of the Legendre-Fenchel transform of the Hamiltonian.
As we will see in Theorem \ref{thm: HJ abs value intro}, for the level set equation ($H(p) = |p|$), this reachability condition can still be interpreted as a one-sided regularity condition, or semiconcavity condition, not for the solution itself, but for its level sets (see Remark \ref{rmk: semiconcavity level sets}).

\subsection{Characterization of the reachable set}

Let us state our first result, which gives a full characterization of the reachable set for the equation \eqref{HJ intro} when the Hamiltonian is merely assumed to be a convex function.  This characterization identifies the functions $u_T$ in $\mathcal{R}_T$ with those functions such that, for any $x\in \R^N$, there exists a function of the form
$$
z\longmapsto T\, H^\ast \left( \dfrac{z-x_0}{T}  \right)  + c,
$$
touching $u_T$ from above at $x$, where $H^\ast$ is the Legendre-Fenchel transform of the $H$.
Let us recall that the Legendre-Fenchel transform of $H$ is the function
$H^\ast: \R^N \longrightarrow (-\infty, +\infty]$ defined by
 \begin{equation}\label{legendre transform}
 H^\ast (q) = \sup_{p\in \R^N} \left\{ p\cdot q - H(p)  \right\}, \qquad \forall q\in \R^N.
 \end{equation}
Note that the function $H^\ast$ is  convex and lower semicontiuous since it is the supremum of convex continuous functions.
Note also that $H^\ast(q)$ may take infinite values whenever $H$ is not superlinear. 
Indeed,  this is the case for $H(p) = |p|$, whose Legendre-Fenchel transform satisfies $H^\ast (q) = +\infty$ for any $|q|>1$.
 
\begin{theorem}\label{thm: reachability condition}
Let $H: \R^N\to \R$ be a convex function, $u_T\in \Lip(\R^N)$ and $T>0$. 
Set the family of functions 
$$
\mathcal{F}_T (u_T) := 
\left\{ \varphi: z \mapsto T\,  H^\ast \left( \dfrac{z-x_0}{T} \right) + c \ : \quad
x_0 \in \R^N, \ c\in \R \ \text{s.t.} \ \varphi(z) \geq u_T(z) \ \forall z\in \R^N \right\},
$$
where $H^\ast$ is the Legendre-Fenchel transform of $H$ as defined in \eqref{legendre transform}.

Then $u_T\in \mathcal{R}_T$ if and only if for all  $x\in \R^N$, there exists $\varphi \in \mathcal{F}_T(u_T)$ such that $\varphi(x) = u_T(x)$.
\end{theorem}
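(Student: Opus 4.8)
The plan is to reduce the statement to the Hopf--Lax representation of $S_T^+$. Recall the classical fact (see \cite{lions1982generalized}) that for $u_0\in\Lip(\R^N)$ and $H$ convex,
$$
S_T^+u_0(x)=\inf_{y\in\R^N}\Big\{u_0(y)+T\,H^\ast\Big(\dfrac{x-y}{T}\Big)\Big\},\qquad x\in\R^N .
$$
A preliminary observation to record is that $H^\ast$ is superlinear: since $H$ is finite and convex it is bounded on bounded sets, so for every $R>0$, with $M_R:=\max_{|p|\le R}H(p)<\infty$, one has $H^\ast(q)\ge\sup_{|p|\le R}\{p\cdot q-H(p)\}\ge R|q|-M_R$ for all $q\in\R^N$ (trivially so where $H^\ast=+\infty$). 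Consequently, whenever $u_0$ is $K$-Lipschitz the map $y\mapsto u_0(y)+TH^\ast((x-y)/T)$ is coercive, since the $-K|x-y|$ coming from the Lipschitz bound on $u_0$ is dominated by the $(K+1)|x-y|$ lower bound for the $H^\ast$-term; using also that $H^\ast$ is lower semicontinuous, the infimum in the Hopf--Lax formula is attained.

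For necessity, assume $u_T=S_T^+u_0$ with $u_0\in\Lip(\R^N)$, fix $x\in\R^N$, and pick a minimizer $y_x$ in the Hopf--Lax formula at $x$ (which exists by the previous paragraph). Then $\varphi(z):=T\,H^\ast((z-y_x)/T)+u_0(y_x)$ is one of the competitors in the infimum defining $u_T(z)$ for every $z\in\R^N$, hence $\varphi\ge u_T$ on $\R^N$, so $\varphi\in\mathcal{F}_T(u_T)$; moreover $\varphi(x)=u_0(y_x)+TH^\ast((x-y_x)/T)=u_T(x)$ by the choice of $y_x$. This is precisely the required test function touching $u_T$ from above at $x$.

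For sufficiency, I would construct the initial datum explicitly by ``deconvolution'': set
$$
u_0(y):=\sup_{z\in\R^N}\Big\{u_T(z)-T\,H^\ast\Big(\dfrac{z-y}{T}\Big)\Big\},\qquad y\in\R^N,
$$
and check successively: (i) $u_0$ is real-valued --- with $K:=\Lip(u_T)$, the bound $u_T(z)\le u_T(y)+K|z-y|$ together with $TH^\ast((z-y)/T)\ge K|z-y|-TM_K$ gives $u_0(y)\le u_T(y)+TM_K$, while evaluating the supremum at any $z$ for which $H^\ast((z-y)/T)$ is finite (such $z$ exist because $H^\ast$ is proper, being the conjugate of a finite convex function) shows $u_0(y)>-\infty$; (ii) $u_0$ is $K$-Lipschitz --- translating a near-optimal $z$ for $u_0(y)$ by $y'-y$ leaves the $H^\ast$-term unchanged and changes the $u_T$-term by at most $K|y-y'|$, so $u_0(y')\ge u_0(y)-K|y-y'|$, and one concludes by symmetry; (iii) $S_T^+u_0=u_T$ --- the inequality $S_T^+u_0\ge u_T$ is immediate from the definition of $u_0$ (take $z=x$ in the supremum), whereas for $S_T^+u_0\le u_T$ one uses the hypothesis: at a given $x$ there is $\varphi\in\mathcal{F}_T(u_T)$ with $\varphi(x)=u_T(x)$, say $\varphi(z)=TH^\ast((z-y_0)/T)+c_0$; the inequality $\varphi\ge u_T$ forces $c_0\ge u_0(y_0)$, whence $u_T(x)=\varphi(x)=TH^\ast((x-y_0)/T)+c_0\ge TH^\ast((x-y_0)/T)+u_0(y_0)\ge S_T^+u_0(x)$.

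The delicate points are those where one must exploit properness and superlinearity of $H^\ast$ even though $H^\ast$ may take the value $+\infty$ (so that the infima are attained and $u_0$ is finite), together with the Lipschitz bound for the reconstructed $u_0$; the rest is bookkeeping around the Hopf--Lax formula. One should also keep in mind the degenerate possibility that an $H^\ast$-term equals $+\infty$ in the displayed inequalities, but each of them remains (trivially) valid in that case.
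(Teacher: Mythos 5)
Your proof is correct, and it takes a somewhat different logical route from the paper's. The paper first invokes the backward--forward fixed-point criterion $u_T\in\mathcal{R}_T \Leftrightarrow S_T^+\circ S_T^- u_T=u_T$ (imported from \cite{esteve2020inverse}) and then unwinds it via the forward and backward Hopf--Lax formulas; in particular, for necessity it works with the canonical preimage $S_T^- u_T$ and reads the touching function off the attainment of the minimum in $u_T = S_T^+(S_T^- u_T)$. You bypass that external criterion entirely: for necessity you take an \emph{arbitrary} $u_0$ with $S_T^+u_0=u_T$, prove attainment of the Hopf--Lax infimum from the superlinearity bound $H^\ast(q)\ge R|q|-M_R$ (which is exactly the paper's Lemma \ref{lem: L coercive}) together with lower semicontinuity, and observe that the competitor at the minimizer is the desired touching function; for sufficiency you define $u_0$ by the sup-convolution formula --- which is precisely $S_T^- u_T$ in the guise of \eqref{HopfLax backward}, though you never need to identify it as a backward viscosity solution --- check it is Lipschitz and finite, and verify $S_T^+u_0=u_T$ by the same two inequalities the paper uses. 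The computations in the sufficiency half essentially coincide with the paper's; what your version buys is self-containedness (no appeal to the backward viscosity operator or to the fixed-point theorem of \cite{esteve2020inverse}) at the cost of having to supply the attainment and Lipschitz/finiteness checks explicitly, which you do correctly, including the needed care with the points where $H^\ast=+\infty$.
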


This characterization is somehow reminiscent of the definition of viscosity subsolution, and can actually be seen as a weaker notion of semiconcavity.
Interesting cases are the power-like Hamiltonians of the form
\begin{equation}
\label{power-like H}
H(p) = \dfrac{|p|^\alpha}{\alpha}  , \qquad \forall p\in \R^N,
\qquad \text{for some $\alpha\in [1, \infty)$.}
\end{equation}
Note that, except for the quadratic case, $\alpha = 2$, Hamiltonians of the form \eqref{power-like H} do not fulfil the hypotheses \eqref{hyp smooth hamiltonian}. 
If we consider $\alpha >1$, then Theorem \ref{thm: reachability condition} implies that for any $T>0$, $u_T\in \mathcal{R}_T$ if and only if, for any $x\in \R^N$, there exists a function of the form
$$
z\longmapsto T \dfrac{\alpha-1}{\alpha} \left| \dfrac{z-x_0}{T}\right|^{\frac{\alpha}{\alpha-1}} + c
$$
touching $u_T$ from above at $x$. 
From this observation, one can deduce the following regularity estimate for the functions in $\mathcal{R}_T$.
The proof of this corollary is given in subsection \ref{subsec:proof corollaries}.

\begin{corollary}
\label{cor: regularity homogeneous hamiltonian}
Let $H$ be of the form \eqref{power-like H} for $\alpha >1$ and $T>0$.
Then, for any $u_T\in \mathcal{R}_T$,  the superdifferential of $u_T(x)$ is nonempty for all $x\in \R^N$, i.e. for all $x\in \R^N$ we have that
$$
D^+ u_T(x) : = \{ q\in \R^N \ : \  \exists \varphi\in C^1 (\R^N) \ \  u_T - \varphi \leq 0, \ u_T (x) - \varphi(x) = 0 \ \nabla \varphi(x) = q  \}\neq \emptyset.
$$
Moreover, the following inequalities hold true:
\begin{enumerate}
\item If $1<\alpha <2$, then the superdifferential
$$
D^2 u_T (x) \leq \dfrac{\Lip (u_T)^{2-\alpha}}{(\alpha-1)T} I_N \qquad \forall x\in \R^N,
$$
where $\Lip (u_T)$ stands for the Lipschitz constant of $u_T$.
\item If $\alpha >2$, then
$$
D^2 u_T (x) \leq  \dfrac{1}{(\alpha-1)T\delta_{x}^{\alpha-2}} I_N \qquad \forall x\in \R^N \quad \text{s.t.} \quad \delta_{x} := \displaystyle\inf_{q\in D^+ u_T(x)} |q|> 0.
$$
\end{enumerate}
\end{corollary}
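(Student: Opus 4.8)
The plan is to specialize Theorem~\ref{thm: reachability condition} to $H(p)=|p|^{\alpha}/\alpha$. A direct computation of the Legendre--Fenchel transform gives $H^\ast(q)=\frac{\alpha-1}{\alpha}|q|^{\beta}$ with conjugate exponent $\beta:=\frac{\alpha}{\alpha-1}>1$, so that the family $\mathcal{F}_T(u_T)$ consists precisely of the functions $\varphi(z)=\frac{\alpha-1}{\alpha}T^{1-\beta}|z-x_0|^{\beta}+c$ lying above $u_T$. Thus, by Theorem~\ref{thm: reachability condition}, $u_T\in\mathcal{R}_T$ iff for every $x\in\R^N$ there are $x_0\in\R^N$ and $c\in\R$ with $\varphi\geq u_T$ on $\R^N$ and $\varphi(x)=u_T(x)$. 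Since $\beta>1$, the map $z\mapsto|z|^{\beta}$ is $C^1$ (its gradient $\beta|z|^{\beta-2}z$ extends continuously by $0$ at the origin), hence $\varphi\in C^1(\R^N)$; being a $C^1$ function touching $u_T$ from above at $x$, it witnesses $\nabla\varphi(x)\in D^+u_T(x)$, which proves the first assertion.

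For the Hessian bounds I would compute, for $x\neq x_0$,
\[
\nabla\varphi(x)=T^{1-\beta}|x-x_0|^{\beta-2}(x-x_0),\qquad |\nabla\varphi(x)|=T^{1-\beta}|x-x_0|^{\beta-1},
\]
\[
D^2\varphi(x)=T^{1-\beta}|x-x_0|^{\beta-2}\bigl(I_N+(\beta-2)\,\nu\otimes\nu\bigr),\qquad \nu:=\tfrac{x-x_0}{|x-x_0|},
\]
so that the eigenvalues of $D^2\varphi(x)$ are $(\beta-1)T^{1-\beta}|x-x_0|^{\beta-2}$ along $\nu$ and $T^{1-\beta}|x-x_0|^{\beta-2}$ on $\nu^{\perp}$. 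Since $\varphi$ is smooth near $x$ and touches $u_T$ from above there, $D^2u_T(x)\leq D^2\varphi(x)$ in the viscosity sense; the degenerate case $x=x_0$ gives $\nabla\varphi(x)=0\in D^+u_T(x)$, hence $\delta_x=0$ so part~(ii) is vacuous there, while in part~(i) the Hessian of $\varphi$ vanishes at $x$ and the bound is trivial.

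The remaining step is to turn the gradient information into a bound on $|x-x_0|$. From $\nabla\varphi(x)\in D^+u_T(x)$, together with the elementary fact that any $C^1$ function lying above an $L$-Lipschitz function and touching it at $x$ has $|\nabla\varphi(x)|\leq L$, we get $\delta_x\leq|\nabla\varphi(x)|\leq\Lip(u_T)$, i.e.
\[
\delta_x^{\,\alpha-1}\,T\ \leq\ |x-x_0|\ \leq\ \Lip(u_T)^{\,\alpha-1}\,T,
\]
using $\tfrac{1}{\beta-1}=\alpha-1$. When $1<\alpha<2$ we have $\beta>2$, the dominant eigenvalue of $D^2\varphi(x)$ is the one along $\nu$, and $|x-x_0|^{\beta-2}$ is increasing in $|x-x_0|$; inserting the upper bound and simplifying the exponents (using $(\alpha-1)(\beta-2)=2-\alpha$) yields the estimate in part~(i). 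When $\alpha>2$ we have $1<\beta<2$, the dominant eigenvalue is the one on $\nu^{\perp}$, and now $|x-x_0|^{\beta-2}$ is \emph{decreasing} in $|x-x_0|$, which is exactly why the hypothesis $\delta_x>0$ is needed to keep $|x-x_0|$ bounded away from $0$; inserting the lower bound and simplifying the exponents the same way yields the estimate in part~(ii).

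The main obstacle I expect is bookkeeping rather than anything deep: one must track which eigenvalue of $D^2\varphi(x)$ dominates (this switches with the sign of $\beta-2$, i.e.\ with $\alpha\lessgtr 2$), whether the relevant bound on $|x-x_0|$ is from above or below, and the degenerate touching point $x=x_0$ where $\varphi$ fails to be $C^2$; and one has to check that "$\varphi$ touches $u_T$ from above at $x$'' legitimately transfers the pointwise Hessian bound on the test function to a viscosity Hessian bound on $u_T$.
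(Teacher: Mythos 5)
Your route is the same as the paper's: specialize Theorem \ref{thm: reachability condition} to $H^\ast(q)=\frac{\alpha-1}{\alpha}|q|^{\beta}$ with $\beta=\frac{\alpha}{\alpha-1}$, note that the touching function $\varphi(z)=T\,H^\ast\bigl(\frac{z-x_0}{T}\bigr)+c$ is $C^1$ so that $\nabla\varphi(x)\in D^+u_T(x)$, derive the two-sided bound $\delta_x^{\,\alpha-1}T\le|x-x_0|\le\Lip(u_T)^{\alpha-1}T$ from $\delta_x\le|\nabla\varphi(x)|\le\Lip(u_T)$, and transfer the Hessian of $\varphi$ at $x$ to $u_T$. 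The nonemptiness of $D^+u_T(x)$ and part (i) come out correctly and match the paper (for $1<\alpha<2$ the dominant eigenvalue of $D^2\varphi(x)$ is the radial one, $(\beta-1)T^{1-\beta}|x-x_0|^{\beta-2}=\frac{1}{\alpha-1}T^{1-\beta}|x-x_0|^{\beta-2}$, and your exponent bookkeeping is right).

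The gap is in your last step for part (ii): your own (correct) eigenvalue computation does not yield the stated estimate. For $\alpha>2$ you have $1<\beta<2$, and, as you say, the dominant eigenvalue of $D^2\varphi(x)$ is the transversal one, which equals $T^{1-\beta}|x-x_0|^{\beta-2}$ with \emph{no} factor $\beta-1=\frac{1}{\alpha-1}$. Inserting $|x-x_0|\ge T\delta_x^{\,\alpha-1}$ then gives $D^2u_T(x)\le\frac{1}{T\delta_x^{\,\alpha-2}}I_N$, which for $\alpha>2$ is strictly weaker than the claimed $\frac{1}{(\alpha-1)T\delta_x^{\,\alpha-2}}I_N$; so ``yields the estimate in part (ii)'' is a non sequitur. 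In fact your computation exposes a flaw in the paper's own argument: the inequality \eqref{hessian Hstar estimate}, $D^2H^\ast(q)\le\frac{1}{\alpha-1}|q|^{\frac{2-\alpha}{\alpha-1}}I_N$, is false for $\alpha>2$ in dimension $N\ge 2$, precisely because the largest eigenvalue of $D^2H^\ast(q)$ is then the transversal one, $|q|^{\frac{2-\alpha}{\alpha-1}}$, not $\frac{1}{\alpha-1}|q|^{\frac{2-\alpha}{\alpha-1}}$. The two constants coincide only when $N=1$ (no transversal directions), or when $\alpha\le 2$. So either part (ii) must be restricted to $N=1$, or its constant must be replaced by $1$; as written, neither your derivation nor the paper's proves the displayed inequality in (ii) for $N\ge 2$.
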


\begin{remark}
\begin{enumerate}
\item From the statement (i) in the previous Corollary, we deduce that for $\alpha\in (1,2)$ a necessary condition for $u_T\in \mathcal{R}_T$ is that $u_T$ has to be semiconcave with a constant depending on $T$ and the Lipschitz constant of $u_T$.
\item From the statement (ii) we can only deduce a weaker semiconcavity estimate for the regime $\alpha>2$. More precisely,  a semiconcavity estimate only holds at points $x$ which are not critical points of $u_T$.

\item In addition, we observe that if $x$ is a local maximum of $u_T$, then it holds that $D^2 u_T (x)\leq 0$. Hence,  for the case $\alpha>2$, we can slightly improve the result by saying that if $u_T\in \mathcal{R}_T$, then $u_T$ is semiconcave at all points $x\in \R^N$ except for the critical points which are not local maxima (i.e.  local minima and saddle points).
\end{enumerate}
\end{remark}

Let us now look at the limit case $\alpha = 1$,  i.e. when $H$ is given by
\begin{equation}
\label{absolute vale intro}
H(p) = | p | , \qquad \forall p\in \R^N,
\end{equation}
where $|\cdot|$ denotes the euclidean norm  in $\R^N$.
Note that, in this case, $H$ is neither differentiable nor strictly convex, and this brings us to a quite different situation as compared to the regular strictly convex case $\alpha>1$.
The equation \eqref{HJ intro} with $H$ given by \eqref{absolute vale intro} is also known as the level-set equation \cite{osher2001level,osher1988fronts} and is often used to describe the propagation of fronts, evolving in time, as the level sets of the viscosity solution to \eqref{HJ intro}.

In our following result, we will see that, when $H$ is given by \eqref{absolute vale intro},  the reachable target $\mathcal{R}_T$ can be characterized by means of the following interior ball condition on the sublevel sets of $u_T$.

\begin{definition}
\label{def: interior ball}
Let $\Omega\subset \R^N$ be a closed set. We say that $\Omega$ satisfies the interior ball condition with radius $r>0$ if for all $x\in \Omega$, there exists $y\in \Omega$ such that
$$
\overline{B(y,r)} \subset \Omega \qquad \text{and} \qquad
x\in  \overline{B(y,r)}.
$$
\end{definition}

We can now state the following theorem.

\begin{theorem}
\label{thm: HJ abs value intro}
Let $u_T\in \Lip (\R^N)$, $H(p) = |p|$ and $T>0$.
Then $u_T\in \mathcal{R}_T$ if and only if for all $\alpha \in \R$, the $\alpha$-sublevel set defined as
$$
\Omega_\alpha (u_T) : = \{ x\in \R^N\, ; \quad u_T(x) \leq \alpha \}
$$
satisfies the interior ball condition of Definition \ref{def: interior ball} with radius $r=T$.
\end{theorem}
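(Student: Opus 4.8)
The plan is to derive this theorem as a corollary of the general characterization in Theorem \ref{thm: reachability condition}, by unwinding what the family $\mathcal{F}_T(u_T)$ becomes when $H(p)=|p|$. First I compute the Legendre--Fenchel transform: $H^\ast(q) = 0$ if $|q|\le 1$ and $H^\ast(q)=+\infty$ otherwise, i.e. $H^\ast$ is the indicator function (in the convex-analysis sense) of the closed unit ball. Consequently the candidate test functions $z\mapsto T H^\ast((z-x_0)/T)+c$ are exactly the functions equal to the constant $c$ on the closed ball $\overline{B(x_0,T)}$ and equal to $+\infty$ outside it. The condition ``$\varphi\ge u_T$ everywhere'' then forces $\varphi$ to be finite only where it equals $c$, so $\varphi\in\mathcal{F}_T(u_T)$ precisely when there is a ball $\overline{B(x_0,T)}$ on which $u_T\le c$; and the touching condition $\varphi(x)=u_T(x)$ at a point $x$ becomes: $x\in\overline{B(x_0,T)}$, $u_T\le c$ on $\overline{B(x_0,T)}$, and $u_T(x)=c$.

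Next I translate the ``if and only if'' of Theorem \ref{thm: reachability condition} into the sublevel-set language. Fix $x\in\R^N$ and set $\alpha:=u_T(x)$. The existence of a touching $\varphi\in\mathcal{F}_T(u_T)$ at $x$ is equivalent to the existence of $x_0$ with $x\in\overline{B(x_0,T)}$ and $u_T\le \alpha$ on $\overline{B(x_0,T)}$, i.e. $\overline{B(x_0,T)}\subset\Omega_\alpha(u_T)$ and $x\in\overline{B(x_0,T)}$. This is exactly the interior ball condition with radius $r=T$ applied to the set $\Omega_{u_T(x)}(u_T)$ at the point $x$. So Theorem \ref{thm: reachability condition} gives: $u_T\in\mathcal{R}_T$ iff for every $x$, the sublevel set $\Omega_{u_T(x)}(u_T)$ has the interior ball property at $x$ with radius $T$. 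It remains to see that this pointwise-on-its-own-level formulation is equivalent to the formulation in the theorem, namely that \emph{every} sublevel set $\Omega_\alpha(u_T)$, $\alpha\in\R$, satisfies the interior ball condition with radius $T$. One direction is immediate (take $\alpha=u_T(x)$). For the converse, given an arbitrary $\alpha$ and a point $x\in\Omega_\alpha(u_T)$, we have $u_T(x)\le\alpha$; applying the hypothesis at level $u_T(x)$ produces a ball $\overline{B(x_0,T)}\subset\Omega_{u_T(x)}(u_T)\subset\Omega_\alpha(u_T)$ containing $x$, since sublevel sets are nested and monotone in $\alpha$. Note also that $\Omega_\alpha(u_T)$ is closed because $u_T$ is continuous, so Definition \ref{def: interior ball} applies verbatim.

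The only genuine subtlety, and the step I expect to need the most care, is the handling of the extended-real-valued test functions: because $H^\ast\equiv+\infty$ off the unit ball, the members of $\mathcal{F}_T(u_T)$ are not the usual $C^1$ test functions, and one must check that Theorem \ref{thm: reachability condition} is being applied correctly with this degenerate $H^\ast$ — in particular that ``$\varphi(z)\ge u_T(z)\ \forall z$'' is vacuously satisfied outside $\overline{B(x_0,T)}$ and only constrains $\varphi$ on the ball, and that the touching condition does not secretly require differentiability. Once this bookkeeping is done, the equivalence is a direct rephrasing; I would also remark (cf. Remark \ref{rmk: semiconcavity level sets}) that the interior ball condition on all sublevel sets is the natural ``semiconcavity of the level sets'' that replaces the differential inequality \eqref{semiconcavity estimate new lions soug} in this non-smooth setting.
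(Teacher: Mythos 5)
Your proposal is correct and follows essentially the same route as the paper's proof: compute $H^\ast$ as the indicator of the closed unit ball, observe that the test functions in $\mathcal{F}_T(u_T)$ reduce to constants on balls of radius $T$ (infinite elsewhere), rewrite the touching condition of Theorem \ref{thm: reachability condition} as the pointwise statement that each $x$ lies in a ball $\overline{B(x_0,T)}$ on which $u_T\leq u_T(x)$, and then pass to arbitrary levels $\alpha$ via the nestedness of sublevel sets. The paper performs exactly these steps, so no further comparison is needed.
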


\begin{remark}
\label{rmk: semiconcavity level sets}
\begin{enumerate}
\item Recall that the convexity (resp. concavity) of a set can be characterized by the non-negativity (resp. non-positivity) of the curvature of its boundary.
Taking this into account,  we see that the interior ball condition of Theorem \ref{thm: HJ abs value intro} implies that the curvature of the boundary of any sub-level set of $u_T$ is bounded from above. 
Hence,
the characterization of the reachable set $\mathcal{R}_T$ given in Theorem \ref{thm: HJ abs value intro}
can be seen as a semiconcavity condition on the sublevel sets of $u_T$. In this case, the regularizing effect of the Hamilton-Jacobi equation is not observed on the solution, but rather on its sub-level sets.

\item We point out that the condition of Theorem \ref{thm: HJ abs value intro} is indeed a one-sided regularity estimate for the boundary of the sub-level sets. 
As a matter of fact, the boundary needs not be smooth in general, and might contain corners, which, in view of the interior ball condition,  will always be pointing towards the interior of the sub-level set.
See Figure \ref{fig:level-sets} for an illustration.
\end{enumerate}
\end{remark}

\begin{SCfigure}
\includegraphics[scale=.2]{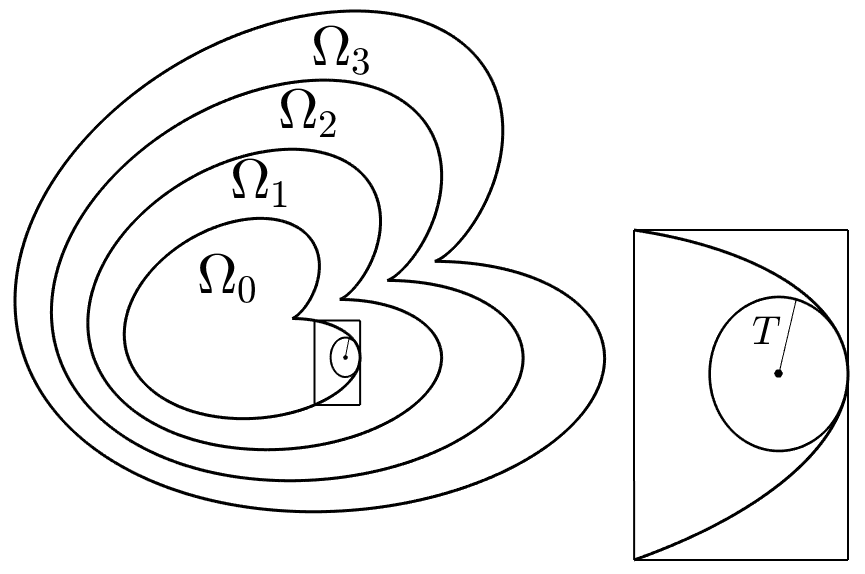}
\caption{The sub-level sets of a function $u_T\in \Lip(\R^2)$ satisfying the interior ball condition from Theorem \ref{thm: HJ abs value intro}.  The region of greatest curvature on the boundary of the sublevel set 0 is zoomed down in the box at the right.}
\label{fig:level-sets}
\end{SCfigure}

In the one-dimensional case in space,  it is sufficient to check the interior ball condition  on the local minima of $u_T$, and then, the above result can be formulated simply as follows:

\begin{corollary}\label{cor:reachability cond example}
Consider the one-dimensional case $N=1$, and let $u_T\in \Lip(\R)$, $H(p) = |p|$ and $T>0$.
Then,  $u_T\in \mathcal{R}_T$ if and only if for any local minimum $x\in \R$  of $u_T$,  there exists $x_0\in \R$ such that $x\in [x_0-T, \, x_0+T]$ and $u_T(y) \leq u_T(x)$ for all $y\in [x_0-T, \, x_0+T]$.
\end{corollary}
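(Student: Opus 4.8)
The plan is to deduce both implications from Theorem~\ref{thm: HJ abs value intro}, using that in dimension one a closed ball of radius $T$ is just a closed interval of length $2T$; consequently (connected subsets of $\R$ being intervals, and connected components of a closed set being closed) a closed set $\Omega\subset\R$ satisfies the interior ball condition of radius $T$ precisely when every \emph{bounded} connected component of $\Omega$ is a nondegenerate closed interval of length at least $2T$. I would use this dictionary freely.

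For necessity, I would take $u_T\in\mathcal{R}_T$ and a local minimum $x$ of $u_T$, set $\alpha:=u_T(x)$ so that $x\in\Omega_\alpha(u_T)$, and simply invoke Theorem~\ref{thm: HJ abs value intro}: the interior ball condition applied to $\Omega_\alpha(u_T)$ at the point $x$ gives some $x_0\in\Omega_\alpha(u_T)$ with $x\in[x_0-T,x_0+T]\subset\Omega_\alpha(u_T)$, which unfolds to $u_T(y)\le\alpha=u_T(x)$ on $[x_0-T,x_0+T]$ --- exactly the claimed condition. (This direction does not actually use that $x$ is a local minimum.)

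For sufficiency, I would assume the local-minimum condition and, again via Theorem~\ref{thm: HJ abs value intro}, reduce to showing that each sublevel set $\Omega_\alpha:=\Omega_\alpha(u_T)$ has the interior ball property of radius $T$. Fixing $\alpha\in\R$ and $x\in\Omega_\alpha$, I would look at the connected component $C$ of $\Omega_\alpha$ through $x$, a closed interval. If $C$ is unbounded the required interval inside $C$ containing $x$ can be written down by hand. If $C=[a,b]$ is bounded, continuity forces $u_T(a)=u_T(b)=\alpha$, and the minimum of $u_T$ over $[a,b]$ is attained at an interior point when it is $<\alpha$, or everywhere (with $u_T\equiv\alpha$ on $[a,b]$) when it equals $\alpha$; either way there is $x^\ast\in(a,b)$ that is a local minimum of $u_T$ with $u_T(x^\ast)\le\alpha$ --- so in particular $C$ is nondegenerate. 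Applying the hypothesis at $x^\ast$ produces an interval of length $2T$ on which $u_T\le u_T(x^\ast)\le\alpha$; being a connected subset of $\Omega_\alpha$ meeting $C$, it sits inside $C=[a,b]$, whence $b-a\ge2T$. Finally I would re-centre: with $x_0$ equal to $x$ clamped to $[a+T,b-T]$ (nonempty because $b-a\ge2T$), one gets $x\in[x_0-T,x_0+T]\subset[a,b]\subset\Omega_\alpha$, verifying the interior ball condition at $x$.

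The main obstacle is precisely the gap between the two points: the hypothesis is given only at local minima, so the interval it yields is anchored near $x^\ast$ and generally misses the arbitrary point $x$ to be covered. The fix is the two-step structure above --- use the hypothesis at $x^\ast$ solely to harvest the length bound $b-a\ge2T$ for the whole component, then recover $x$ by a purely geometric re-centering that needs no further information. The rest is bookkeeping: the unbounded and would-be-degenerate components, the constant-on-$[a,b]$ case (where an interior local minimum still exists), and the case where $u_T$ has no local minimum at all, which causes no trouble since then $\Omega_\alpha$ has no bounded component and the interior ball condition holds trivially.
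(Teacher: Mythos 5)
Your overall route --- reduce to Theorem \ref{thm: HJ abs value intro}, observe that in one dimension the interior ball condition amounts to every bounded connected component of each sublevel set having length at least $2T$, harvest that length bound from the hypothesis applied at an interior local minimum of the component, and then recover an arbitrary point by re-centering --- is the natural one, and the necessity direction, the length-bound step and the re-centering are all correct as written. (The paper itself states this corollary without proof, as an immediate reformulation of Theorem \ref{thm: HJ abs value intro}, so there is no official argument to compare against; yours supplies exactly the missing content.)

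There is, however, one genuine gap: the degenerate case $C=\{a\}$, which you list among the ``bookkeeping'' but whose treatment in your text is circular. You write that ``either way there is $x^\ast\in(a,b)$ that is a local minimum \dots so in particular $C$ is nondegenerate'', but the existence of $x^\ast\in(a,b)$ presupposes $a<b$; when $a=b$ your case analysis produces nothing, and singleton components must genuinely be excluded, since the interior ball condition fails at them. This is not automatic, because a singleton component $\{a\}$ of $\Omega_\alpha(u_T)$ need not be a local minimum of $u_T$ (take $u_T(x)=x^2\sin(1/x)$ with $u_T(0)=0$ and $\alpha=0$: the component of $0$ is $\{0\}$, yet $0$ is not a local minimum), so the hypothesis cannot be invoked at $a$ itself. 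A fix: since $\{a\}$ is a component, neither $[a,a+\eps]$ nor $[a-\eps,a]$ lies in $\Omega_\alpha(u_T)$, so one can pick $w^-<a<w^+$ with $w^+-w^-<2T$ and $u_T(w^\pm)>\alpha=u_T(a)$; the minimum of $u_T$ over $[w^-,w^+]$ is $\le\alpha$ and hence attained at an interior point $x^\ast$, which is therefore a local minimum of $u_T$ with $u_T(x^\ast)\le\alpha$. The hypothesis at $x^\ast$ yields an interval $I$ of length $2T$ on which $u_T\le u_T(x^\ast)\le\alpha$; $I$ is a connected subset of $\Omega_\alpha(u_T)$ avoiding $w^-$ and $w^+$, so either $a\in I$ and $I$ lies in the component $\{a\}$, or $a\notin I$ and $I\subset(w^-,a)$ or $I\subset(a,w^+)$ --- in every case $I$ has length strictly less than $2T$, a contradiction. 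With this insertion (which also substantiates your final claim that absence of local minima forces absence of bounded components), the proof is complete.
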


See Figure \ref{Fig:example corollary absolute value} for an illustration of this characterization.

\begin{remark}\label{rmk:counterexaple concave are reachable}
In section \ref{sec: HJ}, we shall prove in Corollary \ref{cor: concave are reachable} that, as a consequence of Theorem \ref{thm: reachability condition}, the concave functions satisfy the property of being reachable for all positive times $T>0$.
However, from Corollary \ref{cor:reachability cond example}, we can deduce that for the Hamiltonian $H(p) = |p|$, the concave functions are not the only ones satisfying this property.
Indeed,  if $u_T:\R\to \R$ is monotonically increasing or decreasing, the reachability condition from Corollary \ref{cor:reachability cond example} is trivially satisfied, and then $u_T\in \mathcal{R}_T$ for all $T>0$.  Hence, monotone functions are in $\mathcal{R}_T$ for all $T$ no matter they are concave or not.
We recall that in the smooth strictly convex case, it follows from the necessary condition \eqref{semiconcavity estimate new lions soug},  that $u_T\in \mathcal{R}_T$ for all $T>0$ if and only if $u_T$ is concave.
\end{remark}

\begin{center}
\begin{figure}
\includegraphics[scale=.2]{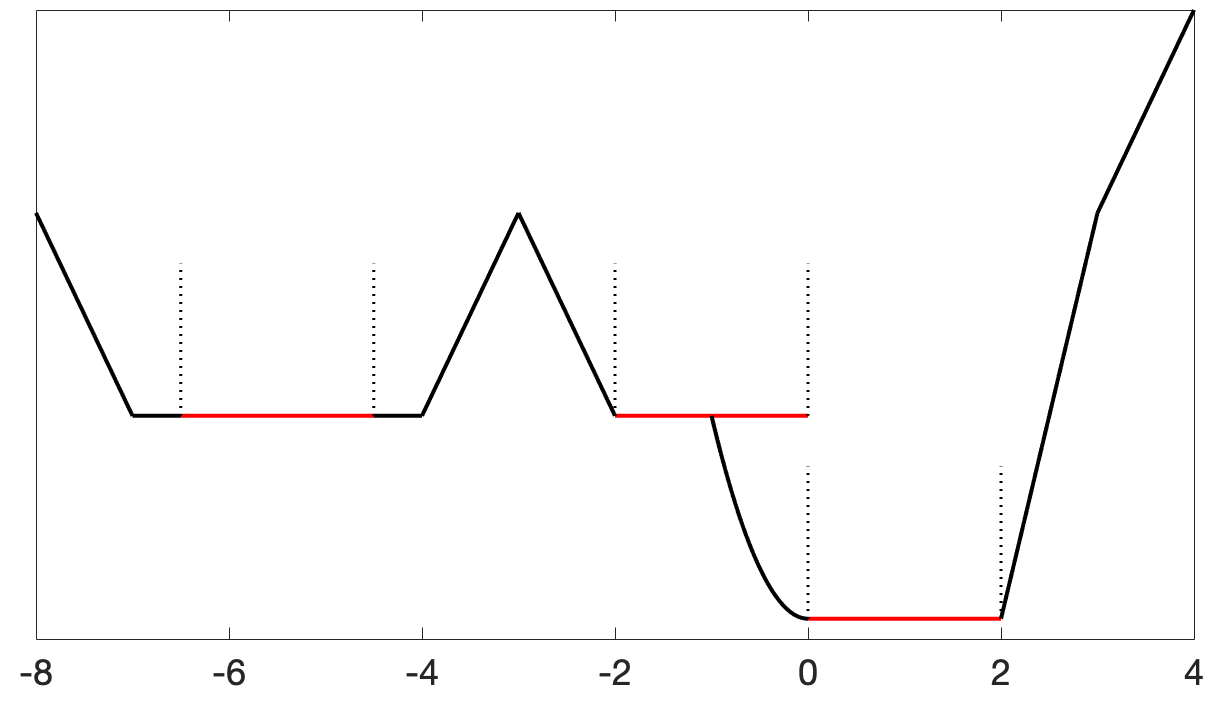}
\caption{An example of function satisfying $u_T\in \mathcal{R}_T$ for the Hamiltonian $H(p) = |p|$ and $T=1$. Observer that, in view of Corollary \ref{cor:reachability cond example}, we only need to check condition (ii) on the local minima of $u_T$.}
\label{Fig:example corollary absolute value}
\end{figure}
\end{center}

\subsection{Structural properties of the reachable set}

As a by-product of the characterization of $\mathcal{R}_T$ given in Theorem \ref{thm: reachability condition}, we can also prove some results concerning the structural properties of the set of reachable functions $\mathcal{R}_T$ for the Hamilton-Jacobi equation \eqref{HJ intro}.  The precise statements of these results are given in subsection \ref{subsec: results HJ}, and their proofs in subsection \eqref{subsec:proof corollaries}.

\begin{enumerate}
\item The reachable set is decreasing in time, i.e. $\mathcal{R}_T\subset \mathcal{R}_{T'}$ for all $0<T'<T$,  and concave functions are reachable for all $T>0$. See Corollary \ref{cor: concave are reachable}.
\item The minimum of two reachable functions is reachable. See Corollary \ref{cor:min of reachables is reachable}.
\item If $H(p) = |p|^\alpha$, with $\alpha \geq 1$, then the reachable set $\mathcal{R}_T$ is star-shaped with center at the origin. See Corollary \ref{cor: star-shaped}.
\item If $H(p) = |p|^2$, then $\mathcal{R}_T$ is convex, and if $H(p) = |p|$, then $\mathcal{R}_T$ is a non-convex cone with vertex at the origin. 
See Corollary \ref{cor: star-shaped}.
\end{enumerate}

\subsection{Reachable set for scalar conservation laws}

In the one-dimensional case in space, it is well-known that Hamilton-Jacobi equations and scalar conservation laws of the form
\begin{equation}
\label{burgers intro}
\partial_t v+ \partial_x[H(v)] = 0 \qquad \text{in} \ (0,T) \times \R
\end{equation}
 are intimately related.
Indeed, if $u\in \Lip( [0,T]\times \R)$ is a viscosity solution to \eqref{HJ intro} with initial condition $u_0\in\Lip(\R)$, then the function $v\in L^\infty((0,T)\times \R)$ given by
$$
v(t,x) = \partial_x u(t,x) \qquad \text{for a.e.} \ (t,x)\in [0,T]\times\R
$$
is the unique entropy solution to \eqref{burgers intro} with initial condition $v_0 = \partial_x u_0$ (see for instance  \cite[Theorem 1.1]{karlsen2000note} and also \cite{caselles1992scalar,colombo2020initial}).

In this section, we adapt the previous results to give a full characterization of the range of the operator
\begin{equation}\label{forward viscosity operator intro SCL}
\begin{array}{cccc}
S_T^{SCL} : & L^\infty (\R) & \longrightarrow & L^\infty (\R) \\
 & v_0 &\longmapsto & v(T,\cdot),
\end{array}
\end{equation}
which associates, to any initial condition $v_0$, the unique entropy solution \cite{dafermos2005hyperbolic, kruvzkov1970first, serre1999systems} to the equation \eqref{burgers intro} at time $T$.
We also define, for any $T>0$, the reachable set for \eqref{burgers intro} as
\begin{equation}
\label{reachable set SCL}
\mathcal{R}_T^{SCL}:= \{ v_T\in L^\infty(\R) \, : \ \exists v_0 \in L^\infty(\R^N) \ \text{such that} \ S_T^{SCL} v_0 = v_T  \} \subset L^\infty (\R),
\end{equation}

For the scalar conservation law \eqref{burgers intro} with a flux $H$ satisfying \eqref{hyp smooth hamiltonian}, it is well-known \cite{colombo2020initial,dafermos1977generalized, escobedo1993asymptotic, hoff1983sharp} that for any $T>0$ and $v_T\in L^\infty(\R)$, the property $v_T\in \mathcal{R}_T^{SCL}$ is equivalent to the one-sided-Lipschitz condition
\begin{equation}
\label{OSL SCL improved intro}
\partial_{p}H(v (t,y)) - \partial_p H (v (t,x)) \leq \dfrac{y-x}{t} \qquad \text{for a.e.} \ x\leq y.
\end{equation}

In the general convex case, in which $H$ is not necessarily differentiable nor strictly convex,  the one-sided-Lipschitz inequality \eqref{OSL SCL improved intro} does not hold in general. Nonetheless, we can adapt Theorem \ref{thm: reachability condition} in the following way to give a full characterization of the functions in $\mathcal{R}_T^{SCL}$.

\begin{theorem}
\label{thm: reach charac SCL}
Let $H: \R \to \R$ be a convex function,  $v_T\in L^\infty(\R)$ and $T>0$.
Then $v_T\in \mathcal{R}_T^{SCL}$   if and only if
\begin{equation}\label{SCL cond}
\begin{array}{c}
\text{for all} \  x\in \R, \ \text{there exists} \ x_0 \in \R \ \text{such that the function} \\
z \longmapsto\displaystyle\int_0^z v_T (y) dy - T \, H^\ast \left( \dfrac{z-x_0}{T}\right) \ \text{has a global maximum at $x$}.
\end{array}
\end{equation}
\end{theorem}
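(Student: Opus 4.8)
The natural strategy is to reduce Theorem~\ref{thm: reach charac SCL} to Theorem~\ref{thm: reachability condition} via the classical correspondence between entropy solutions of \eqref{burgers intro} and viscosity solutions of \eqref{HJ intro}. The plan is as follows. First I would fix $v_T\in L^\infty(\R)$ and define its primitive $u_T(z):=\int_0^z v_T(y)\,dy$, which is Lipschitz on $\R$ with $\partial_z u_T = v_T$ a.e. The key algebraic identity is that $\varphi(z)=T H^\ast((z-x_0)/T)+c$ touches $u_T$ from above at $x$ if and only if the function $z\mapsto u_T(z) - T H^\ast((z-x_0)/T)$ attains a global maximum at $x$: indeed, $\varphi\geq u_T$ on $\R$ is exactly $u_T - T H^\ast((\cdot-x_0)/T)\leq c$, and $\varphi(x)=u_T(x)$ picks out $c$ as the value at $x$, so equality is attained precisely at $x$. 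Hence condition \eqref{SCL cond} for $v_T$ is \emph{verbatim} the condition ``for all $x$, there exists $\varphi\in\mathcal F_T(u_T)$ with $\varphi(x)=u_T(x)$'' of Theorem~\ref{thm: reachability condition} applied to $u_T$ (the additive constant $c$ is harmless). Therefore \eqref{SCL cond} holds if and only if $u_T\in\mathcal R_T$.

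Second, I would close the loop between $u_T\in\mathcal R_T$ and $v_T\in\mathcal R_T^{SCL}$. If $u_T\in\mathcal R_T$, pick $u_0\in\Lip(\R)$ with $S_T^+u_0=u_T$; then by the cited equivalence (\cite{karlsen2000note}, and \cite{caselles1992scalar,colombo2020initial}) $v_0:=\partial_x u_0\in L^\infty(\R)$ satisfies $S_T^{SCL}v_0 = \partial_x u_T = v_T$ a.e., so $v_T\in\mathcal R_T^{SCL}$. Conversely, if $v_T\in\mathcal R_T^{SCL}$, take $v_0\in L^\infty(\R)$ with $S_T^{SCL}v_0=v_T$; letting $u_0(z):=\int_0^z v_0(y)\,dy\in\Lip(\R)$, the viscosity solution $u$ of \eqref{HJ intro} with this initial datum satisfies $\partial_x u(t,\cdot) = S_t^{SCL}v_0$, so $\partial_x u(T,\cdot)=v_T=\partial_x u_T$ a.e.; thus $u(T,\cdot)$ and $u_T$ differ by a constant in $z$, and since $\mathcal R_T$ is invariant under adding constants (if $S_T^+u_0 = w$ then $S_T^+(u_0+c)=w+c$, because the equation \eqref{HJ intro} only involves $\nabla_x u$ and $\partial_t u$), we get $u_T\in\mathcal R_T$. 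Chaining the three equivalences gives the theorem.

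The one subtlety, and the step I expect to require the most care, is the handling of the additive constant and of the a.e.\ identification between $\partial_x u$ and the entropy solution. One must make sure that the primitive $u_0$ of $v_0$ is genuinely Lipschitz (immediate since $v_0\in L^\infty$) and that the PDE correspondence holds in the precise form needed, namely that for $u_0\in\Lip(\R)$ the viscosity solution satisfies $\partial_x u(t,\cdot)=S_t^{SCL}(\partial_x u_0)$ for a.e.\ $x$, for every $t\in(0,T]$; this is exactly \cite[Theorem 1.1]{karlsen2000note}, so it can be invoked directly. A second point worth a line is that $H^\ast$ may be $+\infty$ (as noted in the excerpt for $H(p)=|p|$), so in \eqref{SCL cond} the function $z\mapsto u_T(z)-TH^\ast((z-x_0)/T)$ is $\R\cup\{-\infty\}$-valued; the notion of ``global maximum at $x$'' still makes sense (it means the value at $x$ is finite and dominates the values everywhere), and this matches $\mathcal F_T(u_T)$ since there $\varphi\geq u_T$ forces $\varphi(z)$ finite wherever needed—so nothing breaks, but it should be stated explicitly.

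Concretely, the proof would be organized as: (1) reduce to Theorem~\ref{thm: reachability condition} by showing \eqref{SCL cond} $\iff u_T\in\mathcal R_T$ via the touching/global-maximum reformulation of $\mathcal F_T$; (2) use \cite{karlsen2000note} together with translation-invariance of $\mathcal R_T$ under additive constants to show $u_T\in\mathcal R_T \iff v_T\in\mathcal R_T^{SCL}$; (3) conclude. No hard estimates are needed—the content is entirely in the bookkeeping of the Hopf–Cole-type correspondence and in recognizing that \eqref{SCL cond} is a literal transcription of the hypothesis of Theorem~\ref{thm: reachability condition}.
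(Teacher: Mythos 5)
Your proposal is correct and follows exactly the route the paper takes: reduce to Theorem \ref{thm: reachability condition} by passing to the primitive $u_T(z)=\int_0^z v_T(y)\,dy$, identify the touching condition defining $\mathcal{F}_T(u_T)$ with the global-maximum condition \eqref{SCL cond}, and use the viscosity/entropy correspondence to equate $u_T\in\mathcal{R}_T$ with $v_T\in\mathcal{R}_T^{SCL}$. In fact your write-up is more careful than the paper's own two-line proof, which leaves implicit both the additive-constant bookkeeping and the possibility that $H^\ast$ takes the value $+\infty$; those are exactly the right points to make explicit.
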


Sharp one-sided regularity estimates are for power-like fluxes of the form $|p|^\alpha$ with $\alpha>1$ are given in \cite{escobedo1993asymptotic}. 
The limit case $\alpha = 1$ is again different since $H$ is no longer differentiable.
The following theorem provides a full characterization of the functions in $\mathcal{R}_T^{SCL}$, when the flux is the absolute value.

\begin{theorem}
\label{thm: reachability SCL abs val}
Let $v_T\in L^\infty(\R)$,  $H(p) = |p|$ and $T>0$. Then,   $v_T\in \mathcal{R}_T^{SCL}$ if and only if
\begin{equation}
\label{OSLC abs value}
\operatorname{sgn} (v_T(y)) - \operatorname{sgn} (v_T(x)) \leq \dfrac{y-x}{T} \qquad \text{for a.e.} \ x,y\in \operatorname{supp} (v_T) \ \text{satisfying} \ x\leq y.
\end{equation}
\end{theorem}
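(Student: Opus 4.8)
The plan is to deduce this from Theorem~\ref{thm: reach charac SCL}. For $H(p)=|p|$ one computes $H^\ast(q)=0$ if $|q|\le1$ and $H^\ast(q)=+\infty$ if $|q|>1$, so $T\,H^\ast\big((z-x_0)/T\big)$ vanishes on $[x_0-T,x_0+T]$ and is $+\infty$ off it. Writing $U(z):=\int_0^z v_T(y)\,dy$ for a Lipschitz primitive of $v_T$ (so $U'=v_T$ a.e.), the function in \eqref{SCL cond} equals $U$ on $[x_0-T,x_0+T]$ and $-\infty$ elsewhere, hence it has a global maximum at $x$ precisely when $x\in[x_0-T,x_0+T]$ and $U\le U(x)$ throughout $[x_0-T,x_0+T]$. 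Thus Theorem~\ref{thm: reach charac SCL} becomes:
\[
\textrm{(P)}\qquad \text{for every }x\in\R\text{ there is }x_0\in\R\text{ with }x\in[x_0-T,x_0+T]\text{ and }U\le U(x)\text{ on }[x_0-T,x_0+T].
\]
Equivalently, $v_T$ is reachable iff every connected component of every sublevel set $\{U\le\alpha\}$ is unbounded or has length at least $2T$, which is the one-dimensional form of the interior ball condition of Theorem~\ref{thm: HJ abs value intro} applied to the primitive. It remains to show that \textrm{(P)} is equivalent to \eqref{OSLC abs value}; note that the only non-vacuous instances of \eqref{OSLC abs value} are pairs $x<y$ with $v_T(x)<0<v_T(y)$, where it demands $y-x\ge2T$.

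\textbf{\textrm{(P)} implies \eqref{OSLC abs value}.} Arguing by contraposition, assume \eqref{OSLC abs value} fails; then a positive-measure set of pairs $(x,y)$ satisfies $x<y<x+2T$, $v_T(x)<0$, $v_T(y)>0$, and by the Lebesgue differentiation theorem we may pick Lebesgue points $\xi<\eta$ of $v_T$ with $v_T(\xi)<0<v_T(\eta)$ and $\eta-\xi<2T$. Since $\frac1h\big(U(\xi)-U(\xi-h)\big)\to v_T(\xi)<0$ and $\frac1h\big(U(\eta+h)-U(\eta)\big)\to v_T(\eta)>0$ as $h\to0^+$, there is $h_0>0$ with $U>U(\xi)$ on $(\xi-h_0,\xi)$ and $U>U(\eta)$ on $(\eta,\eta+h_0)$. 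Let $x^\ast\in[\xi,\eta]$ minimise $U$ over $[\xi,\eta]$, so that $U(x^\ast)\le\min\{U(\xi),U(\eta)\}$ and $\{U\le U(x^\ast)\}\cap(\xi-h_0,\eta+h_0)\subseteq[\xi,\eta]$. Any interval $[x_0-T,x_0+T]$ containing $x^\ast$ has length $2T>\eta-\xi=|[\xi,\eta]|$, hence is not contained in $[\xi,\eta]$; being connected, it meets $(\xi-h_0,\xi)$ or $(\eta,\eta+h_0)$, where $U>U(x^\ast)$, and therefore cannot lie inside $\{U\le U(x^\ast)\}$. This contradicts \textrm{(P)} at $x=x^\ast$.

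\textbf{\eqref{OSLC abs value} implies \textrm{(P)}.} Fix $x\in\R$ and set $\ell:=\sup\big(\{z<x:U(z)>U(x)\}\cup\{-\infty\}\big)$ and $r:=\inf\big(\{z>x:U(z)>U(x)\}\cup\{+\infty\}\big)$. By construction $U\le U(x)$ on $(\ell,r)$, hence $[\ell,r]\subseteq\{U\le U(x)\}$ and $\ell\le x\le r$. If $\ell=-\infty$ or $r=+\infty$, then $\{U\le U(x)\}$ contains a half-line through $x$ and \textrm{(P)} holds at $x$. Otherwise $\ell,r$ are finite, $U(\ell)=U(r)=U(x)$ by continuity, and $\ell$ (resp.\ $r$) is a limit of points where $U>U(x)$; therefore $\int v_T<0$ over arbitrarily small left-neighbourhoods of $\ell$ and $\int v_T>0$ over arbitrarily small right-neighbourhoods of $r$, so for every $\eps>0$ there are Lebesgue points $\xi\in(\ell-\eps,\ell)$ and $\eta\in(r,r+\eps)$ of $v_T$ with $v_T(\xi)<0<v_T(\eta)$ and $\xi<\eta$. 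By \eqref{OSLC abs value}, $\eta-\xi\ge2T$, hence $r-\ell\ge(\eta-\xi)-2\eps\ge2T-2\eps$ for every $\eps>0$, i.e.\ $r-\ell\ge2T$. Since $[\ell,r]\subseteq\{U\le U(x)\}$ has length $\ge2T$ and contains $x$, any $x_0\in[x-T,x+T]\cap[\ell+T,r-T]$ (a nonempty interval) gives $[x_0-T,x_0+T]\subseteq\{U\le U(x)\}$ with $x\in[x_0-T,x_0+T]$; thus \textrm{(P)} holds at $x$.

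\textbf{Where the difficulty lies.} Step~1 and the Lebesgue-point estimates are routine; the core of the proof is the geometric dichotomy above: turning a failure of \eqref{OSLC abs value} into a sublevel set of $U$ that is \emph{too narrow} near a minimiser of $U$, and conversely exploiting \eqref{OSLC abs value} to force the maximal interval around an arbitrary point on which $U$ stays below its value there to have width at least $2T$. The bookkeeping that requires the most care is the interplay between the ``a.e.'' quantifier and $\supp(v_T)$ in \eqref{OSLC abs value}: the argument only ever invokes pairs with $v_T(\xi)<0<v_T(\eta)$, so one has to check that these already carry the full content of the condition, the pairs with a vanishing endpoint value imposing nothing.
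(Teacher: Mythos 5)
Your proof is correct and follows essentially the same route as the paper's: both reduce via Theorem \ref{thm: reach charac SCL} and the explicit form of $H^\ast$ (the indicator of the unit ball) to the statement that every point admits a surrounding interval of length $2T$ on which the primitive $z\mapsto\int_0^z v_T$ attains its global maximum at that point, and then translate this into the sign condition \eqref{OSLC abs value} by a maximal-interval argument. The differences are cosmetic: you realize the maximal interval as a sublevel component of the primitive $U$ and prove one implication by contraposition, whereas the paper builds the interval directly from the sign structure of $v_T$ (the endpoints $x_1,x_2$) and argues both implications directly.
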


Here, the sign function $\operatorname{sgn}: \R\setminus \{0\} \to \{-1, 1\}$ is defined as
$$
\operatorname{sgn} (z) :=
\left\{
\begin{array}{ll}
-1 & \text{if} \ z<0 \\
1 & \text{if} \ z>0.
\end{array}
\right. 
$$

The above result must be interpreted as follows:
in order for $v_T$ to be reachable, any sign change, from negative to positive, must be separated by an interval of length $2T$ where $v_T$ vanishes.
More precisely, if we define the supports of the positive and negative parts of $v_T$
$$
A_+ = \{ x \in \R \, ; \ v_T(x)>0\} \qquad \text{and} \qquad
A_- = \{ x\in \R \, ; \ v_T(x) <0 \}.
$$
then it must hold that
$$
y-x\geq 2T, \qquad \text{for a.e.} \  x\in A_- \ \text{and for a.e. } \ y\in A_+ \ \text{with} \ x\leq y.
$$
See Figure \ref{Fig:example Burgers  absolute value} for an illustration of a function $v_T$ satisfying this property.

\begin{center}
\begin{figure}
\includegraphics[scale=.2]{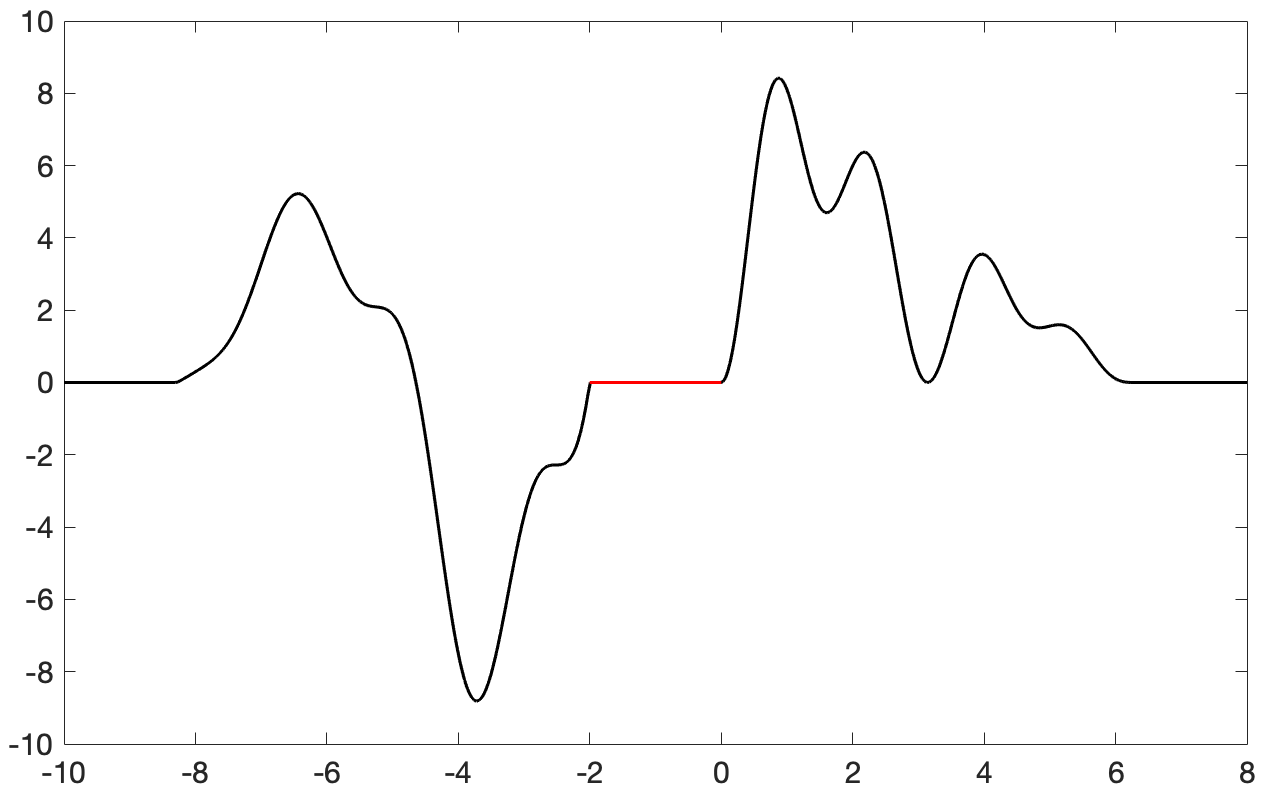}
\caption{An example of a function $v_T\in \mathcal{R}_T^{SCL}$ for the flux $H(p) = |p|$ and $T=1$. }
\label{Fig:example Burgers  absolute value}
\end{figure}
\end{center}

The rest of the paper is structured as follows.
Section \ref{sec: HJ} is devoted to Hamilton-Jacobi equations. In subsection \ref{subsec: results HJ}, we present some corollaries concerning the structural properties of $\mathcal{R}_T$, that can be deduced from Theorem \ref{thm: reachability condition}.   Then, in subsection \ref{subsec: preliminaries} we give some prelimiaries about Hamilton-Jacobi equations and the Hopf-Lax formula which are then used in subsections \ref{subsec: proof reachability} and \ref{subsec:proof corollaries} to prove our results.
In section \ref{sec: SCL}, we prove the characterization of the reachable set given in Theorem \ref{thm: reach charac SCL} for scalar conservation laws \eqref{burgers intro} with general convex flux,  and we also prove Theorem \ref{thm: reachability SCL abs val} for the case when the flux is the absolute value.
Finally, we conclude the paper with a section describing our conclusions and presenting a couple of open questions.

\section{Hamilton-Jacobi equations}
\label{sec: HJ}

In this section, we deal with Hamilton-Jacobi equations of the form \eqref{HJ intro} with a convex Hamiltonian $H: \R^N\to \R$,  and without making any further regularity assumptions.
As announced in the introduction, for a given $T>0$, our main goal is to prove the full characterization (necessary and sufficient condition) given in Theorem \ref{thm: reachability condition} for the reachable set $\mathcal{R}_T$, defined as in \eqref{reachable set}, and also prove its main properties.
Before addressing the proofs of our results, let us state in the following subsection the results concerning the structural properties of $\mathcal{R}_T$, that can be deduced from Theorem \ref{thm: reachability condition}.

\subsection{Reachable set: main properties}
\label{subsec: results HJ}

Theorem \ref{thm: reachability condition} has some interesting consequences, revealing information about the structure of the reachable set $\mathcal{R}_T$, and the way it evolves as we increase the time horizon $T$.
The following result ensures that the reachable set decreases as $T$ increases, and that concave functions have the property of being reachable for all $T>0$.
The corollary is proved in subsection \ref{subsec:proof corollaries}.
 
 \begin{corollary}\label{cor: concave are reachable}
 Let $H:\R^N\to \R$ be a convex function. Then,
 $$
 \text{for any} \quad 0<T'<T, \quad
 \text{we have} \quad \mathcal{R}_T \subset \mathcal{R}_{T'}.
 $$
Moreover, if $u_T\in \Lip(\R^N)$ is a concave function, then,  $u_T \in \mathcal{R}_T$ for all $T>0$.
 \end{corollary}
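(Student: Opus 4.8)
The plan is to prove the two assertions separately, using Theorem~\ref{thm: reachability condition} as the main tool together with the Fenchel--Young (in)equality, and, for the first part, the semigroup property of the viscosity operator. For the monotonicity $\mathcal{R}_T\subset\mathcal{R}_{T'}$ with $0<T'<T$, the quickest argument is the semigroup identity $S_T^+ = S_{T'}^+\circ S_{T-T'}^+$: if $u_T = S_T^+ u_0$, then $u_T = S_{T'}^+\bigl(S_{T-T'}^+ u_0\bigr)$ and $S_{T-T'}^+ u_0\in\Lip(\R^N)$, hence $u_T\in\mathcal{R}_{T'}$. In keeping with the spirit of this subsection one can also argue purely from Theorem~\ref{thm: reachability condition}: given $x\in\R^N$ and $\varphi(z)=T\,H^\ast\bigl(\tfrac{z-x_0}{T}\bigr)+c\in\mathcal{F}_T(u_T)$ with $\varphi(x)=u_T(x)$, set $q_0:=\tfrac{x-x_0}{T}$ (so $H^\ast(q_0)<\infty$ since $\varphi(x)<\infty$) and define the ``narrower'' competitor $\psi(z):=T'\,H^\ast\bigl(\tfrac{z-(x-T'q_0)}{T'}\bigr)+c+(T-T')H^\ast(q_0)$. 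One checks $\psi(x)=\varphi(x)=u_T(x)$, and, writing $w=z-x$ and using that $s\mapsto s\,H^\ast(q_0+w/s)$ is convex on $(0,\infty)$ (it is the perspective of $H^\ast$ precomposed with the affine map $s\mapsto(sq_0+w,s)$) with recession slope equal to $H^\ast(q_0)$, one gets $\psi(z)-\varphi(z)\ge 0$ for all $z$, hence $\psi\in\mathcal{F}_{T'}(u_T)$ touches $u_T$ at $x$; since $x$ is arbitrary, Theorem~\ref{thm: reachability condition} gives $u_T\in\mathcal{R}_{T'}$.

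For the second assertion, let $u_T\in\Lip(\R^N)$ be concave, fix $T>0$ and $x\in\R^N$. Since $u_T$ is concave, $D^+u_T(x)\neq\emptyset$; pick $p\in D^+u_T(x)$, so $u_T(z)\le u_T(x)+p\cdot(z-x)$ for all $z$. Since $H$ is a finite convex function on all of $\R^N$, it is locally Lipschitz and $\partial H(p)\neq\emptyset$; pick $q\in\partial H(p)$, which yields the Fenchel--Young equality $H^\ast(q)=p\cdot q-H(p)$. Now set $x_0:=x-Tq$, $c:=u_T(x)-T\,H^\ast(q)$, and $\varphi(z):=T\,H^\ast\bigl(\tfrac{z-x_0}{T}\bigr)+c$. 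By construction $\varphi(x)=T\,H^\ast(q)+c=u_T(x)$, and for any $z$ the Fenchel--Young inequality $H^\ast\bigl(\tfrac{z-x_0}{T}\bigr)\ge p\cdot\tfrac{z-x_0}{T}-H(p)$, combined with $x_0=x-Tq$ and $c=u_T(x)-T\bigl(p\cdot q-H(p)\bigr)$, gives $\varphi(z)\ge p\cdot(z-x)+u_T(x)\ge u_T(z)$. Hence $\varphi\in\mathcal{F}_T(u_T)$ with $\varphi(x)=u_T(x)$; as $x$ is arbitrary, Theorem~\ref{thm: reachability condition} gives $u_T\in\mathcal{R}_T$, and since $T>0$ was arbitrary, $u_T\in\mathcal{R}_T$ for all $T>0$.

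The computations are elementary once Theorem~\ref{thm: reachability condition} is available; the only points needing a little care are standard convex-analysis facts — the nonemptiness of $\partial H(p)$ (from finiteness of $H$ on $\R^N$) and of $D^+u_T(x)$ (from concavity of $u_T$), and, for the alternative proof of (i), the convexity of the perspective function together with the identification of its recession slope with $H^\ast(q_0)$ via lower semicontinuity and convexity of $H^\ast$ restricted to the relevant segment. I expect the main obstacle to be purely bookkeeping: ensuring the test functions built from $H^\ast$ are well defined where it matters (i.e.\ that the relevant arguments lie in the effective domain of $H^\ast$, the case of value $+\infty$ being harmless since then $\varphi\equiv+\infty\ge u_T$ there) and that the additive constants are real, which is automatic here because every touching value $u_T(x)$ is finite. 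Note also that the alternative derivation of (i) can be bypassed entirely by the semigroup argument, which makes (i) essentially immediate.
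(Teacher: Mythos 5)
Your proof is correct and follows essentially the same route as the paper: part (i) is the identical semigroup argument, and part (ii) constructs the same touching function $z\mapsto T\,H^\ast\bigl(\tfrac{z-x_0}{T}\bigr)+c$ with $x_0=x-Tq$ from a supporting hyperplane of $u_T$ at $x$. The only cosmetic difference is that you obtain the point $q$ as an element of $\partial H(p)$ and invoke the Fenchel--Young equality, whereas the paper obtains the same point as a minimizer of $q\mapsto H^\ast(q)-p\cdot q$ using the coercivity of $H^\ast$ from Lemma \ref{lem: L coercive}; these are dual descriptions of the same object, and the resulting inequalities coincide.
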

 
 \begin{remark}\label{rmk: concave targets}
Corollary \ref{cor: concave are reachable} states that concavity is a sufficient condition for a function to be reachable for all $T>0$.  However, it is not necessary in general. Indeed,  it can be proved (see Remark \ref{rmk:counterexaple concave are reachable}) that, if one considers the one-dimensional case with the Hamiltonian given by $H(p) = |p|$,  any globally Lipschitz monotone (increasing or decreasing) function $u_T:\R\to \R$ is reachable for all $T>0$, even if it is not concave.
 It differs from the smooth uniformly convex case \eqref{hyp smooth hamiltonian}, where,  due to the necessary condition \eqref{semiconcavity estimate new lions soug}, a function is reachable for all $T>0$ if and only if it is a concave function.
 \end{remark}
 
Another interesting consequence of Theorem \ref{thm: reachability condition} is the following corollary, which roughly says that the minimum of two reachable targets is reachable.
The proof of this corollary is omitted as it is a straightforward consequence of Theorem \ref{thm: reachability condition}.

 \begin{corollary}\label{cor:min of reachables is reachable} 
 Let $T>0$, let $H:\R^N\to \R$ be a convex function, and let $H^\ast$ be its Legendre-Fenchel transform as defined in \eqref{legendre transform}.  Then, the following statements hold true.
\begin{enumerate}
\item For any $u_T,v_T\in \mathcal{R}_T$, the function
$w_T(x) := \min \{ u_T(x), v_T(x) \}$
satisfies $w_T\in \mathcal{R}_T$.

\item If in addition $H^\ast$ is locally Lipschitz,  then for any $u_T\in \mathcal{R}_T$, $x_0\in \R^N$ and $c\in \R$, the function
$$
w_T(x) := \min \left\{  u_T(x), \ T\, H^\ast\left( \dfrac{x-x_0}{T} \right) + c \right\}
$$
satisfies $w_T\in \mathcal{R}_T$.
\end{enumerate} 
 \end{corollary}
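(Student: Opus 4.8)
The plan is to read off both assertions directly from the pointwise characterization in Theorem~\ref{thm: reachability condition}, the key observation being an elementary monotonicity of the test-function families: if $w_T\le u_T$ pointwise on $\R^N$, then $\mathcal{F}_T(u_T)\subseteq\mathcal{F}_T(w_T)$, because the only constraint in the definition of $\mathcal{F}_T$ is the inequality $\varphi\ge(\cdot)$, which only becomes easier to satisfy when the profile is lowered. Each statement will then follow by exhibiting, for every $x\in\R^N$, an admissible touching function for $w_T$ at $x$.

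For (i), I would first record that $w_T=\min\{u_T,v_T\}$ is globally Lipschitz, with $\Lip(w_T)\le\max\{\Lip(u_T),\Lip(v_T)\}$, so that Theorem~\ref{thm: reachability condition} applies to $w_T$. Then, fixing $x\in\R^N$ and assuming without loss of generality $w_T(x)=u_T(x)$, I would invoke Theorem~\ref{thm: reachability condition} for $u_T$ to obtain $\varphi\in\mathcal{F}_T(u_T)$ with $\varphi(x)=u_T(x)=w_T(x)$; by the monotonicity above, $\varphi\in\mathcal{F}_T(w_T)$. Since $x$ was arbitrary, the criterion of Theorem~\ref{thm: reachability condition} holds for $w_T$, hence $w_T\in\mathcal{R}_T$.

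For (ii), write $\psi(z):=T\,H^\ast((z-x_0)/T)+c$. I would first use the hypothesis that $H^\ast$ is locally Lipschitz to ensure that $\psi$ is a genuine real-valued function, and then note that, being the Legendre-Fenchel transform of a finite-valued convex function, $H^\ast$ is superlinear, hence so is $\psi$, whereas $u_T$ grows at most linearly. Consequently $\{z\in\R^N:\psi(z)<u_T(z)\}$ is bounded, $w_T=\min\{u_T,\psi\}$ coincides with $u_T$ outside a compact set, and, being locally Lipschitz as a minimum of two locally Lipschitz functions, it is in fact globally Lipschitz; so Theorem~\ref{thm: reachability condition} applies to $w_T$ as well. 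The touching argument then splits into two cases: at a point $x$ with $w_T(x)=u_T(x)$ I would argue exactly as in (i); at a point $x$ with $w_T(x)=\psi(x)$ I would simply take $\varphi=\psi$, which has the prescribed form with parameters $x_0,c$ and satisfies $\psi\ge w_T$ on $\R^N$, so that $\psi\in\mathcal{F}_T(w_T)$ and $\psi(x)=w_T(x)$. In both cases the criterion of Theorem~\ref{thm: reachability condition} is met, hence $w_T\in\mathcal{R}_T$.

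The only step that is not completely formal is the verification, in (ii), that $w_T$ is globally --- and not merely locally --- Lipschitz, which is exactly where the hypothesis on $H^\ast$ is used; the touching part itself is a one-line consequence of the monotonicity of the families $\mathcal{F}_T(\cdot)$. I therefore do not expect any real obstacle, consistently with the statement being a straightforward corollary of Theorem~\ref{thm: reachability condition}.
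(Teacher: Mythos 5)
Your argument is correct: the paper omits the proof of this corollary, describing it as a straightforward consequence of Theorem \ref{thm: reachability condition}, and your key monotonicity observation that $w_T\le u_T$ implies $\mathcal{F}_T(u_T)\subseteq\mathcal{F}_T(w_T)$, combined with the touching-function criterion at each point, is exactly the intended route. The only detail worth making explicit is that the superlinearity of $H^\ast$ you invoke in part (ii) is precisely the content of Lemma \ref{lem: L coercive}.
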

 
 Note that in (ii), the assumption of $H^\ast$ being a locally Lipschitz continuous function is needed to guarantee that $w_T\in \Lip(\R^N)$. 
 Corollary \ref{cor:min of reachables is reachable} provides, in particular, a simple method to construct reachable functions with compact support when $H^\ast$ is locally Lipschitz.
Note that the zero function is reachable for any $T>0$. Then, for any given finite set $\{ (x_i,c_i) \}_{i=1}^k\subset \R^N\times \R$, we can define the function
$$
u_T (x) := \min\left\{ 0, \ T\, H^\ast \left( \dfrac{x-x_1}{T} \right) + c_1, \ \ldots , \  T\, H^\ast \left( \dfrac{x-x_k}{T} \right) + c_k \right\}
$$
which, in view of Corollary \ref{cor:min of reachables is reachable}, satisfies $u_T\in \mathcal{R}_T$.
Of course, the method can readily be applied to larger collections of points $\{ (x_i,c_i) \}_\mathcal{I} \subset \R^N\times \R$, under the assumption of $c_i$ being uniformly bounded from below.
See Figure \ref{Fig:example reachable targets} for an illustration of this result.

\begin{center}
\begin{figure}
\includegraphics[scale=.18]{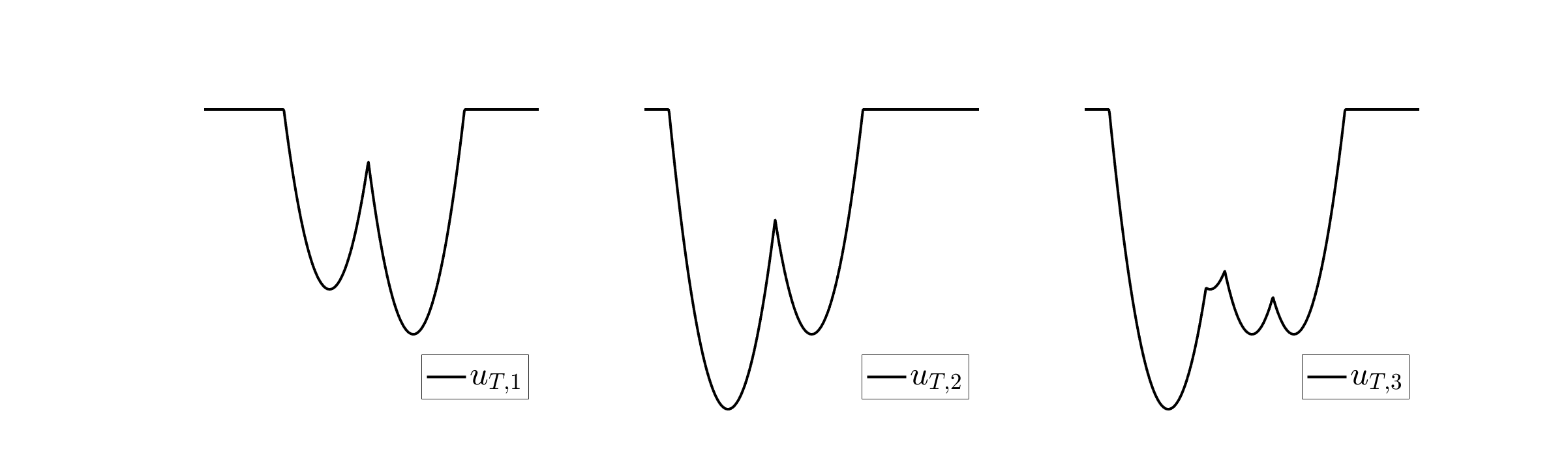}
\caption{Three examples of reachable targets in time $T=1$, with compact support,  for the Hamiltonian $H(p) = |p|^2/2$. The first two examples were constructed using the statement (ii) in Corollary \ref{cor:min of reachables is reachable}, whereas the third one was constructed using the statement (i) as the minimum of the two first examples.}
\label{Fig:example reachable targets}
\end{figure}
\end{center}

The last property about the reachable set $\mathcal{R}_T$ that we are going to present as a consequence of Theorem \ref{thm: reachability condition} applies to power-like Hamiltonians of the form \eqref{power-like H}.
The following corollary ensures that the reachable set $\mathcal{R}_T$ is \emph{star-shaped} with center the origin, i.e. 
\begin{equation}
\label{star-shaped}
\forall u_T\in \mathcal{R}_T\quad \text{and} \quad  \forall \lambda\in [0,1],
\quad \text{we have} \quad
 \lambda u_T\in \mathcal{R}_T.
\end{equation}
For the particular case $\alpha = 2$,  the set $\mathcal{R}_T$ is additionally convex, and if $\alpha = 1$, then $\mathcal{R}_T$ is actually a non-convex cone with vertex at the origin, i.e.
\begin{equation}
\label{cone def}
\forall u_T\in \mathcal{R}_T\quad \text{and} \quad  \forall \lambda\in [0,\infty),
\quad \text{we have} \quad
 \lambda u_T\in \mathcal{R}_T.
\end{equation}

\begin{corollary}
\label{cor: star-shaped}
Let $T>0$ and let $H:\R^N\to \R$ be given by \eqref{power-like H} for some $\alpha\in [1, \infty)$.
Then  the reachable set $\mathcal{R}_T$ is star-shaped with center the origin, i.e. \eqref{star-shaped} holds.
 Moreover, if $\alpha = 2$,  then $\mathcal{R}_T$ is convex, and
if $\alpha =1$, then $\mathcal{R}_T$ is a cone with vertex at the origin.
\end{corollary}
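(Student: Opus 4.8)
The plan is to prove the three assertions separately, with different tools for each. For the \emph{star-shapedness}, which must hold for every $\alpha\in[1,\infty)$, I would not use Theorem~\ref{thm: reachability condition} directly but rather the Hopf--Lax representation $S_T^+u_0(x)=\inf_y\{u_0(y)+TH^\ast((x-y)/T)\}$ together with the monotonicity $\mathcal R_{T''}\subset\mathcal R_{T'}$ for $T'\le T''$ from Corollary~\ref{cor: concave are reachable}. When $\alpha>1$ one has $H^\ast(q)=|q|^{\alpha'}/\alpha'$ with $\alpha'=\alpha/(\alpha-1)$, so $TH^\ast((x-y)/T)=|x-y|^{\alpha'}/(\alpha'T^{\alpha'-1})$, and for $\lambda\in(0,1]$,
$$\lambda\, S_T^+u_0(x)=\inf_y\Big\{\lambda u_0(y)+\tfrac{|x-y|^{\alpha'}}{\alpha'\tilde T^{\alpha'-1}}\Big\}=S_{\tilde T}^+(\lambda u_0)(x),\qquad \tilde T:=T\lambda^{1-\alpha},$$
because $\lambda/T^{\alpha'-1}=1/\tilde T^{\alpha'-1}$ (using $1/(\alpha'-1)=\alpha-1$). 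Since $\alpha>1$ and $\lambda\le1$ force $\tilde T\ge T$, we obtain $\lambda u_T=S_{\tilde T}^+(\lambda u_0)\in\mathcal R_{\tilde T}\subset\mathcal R_T$; combined with $0=S_T^+0\in\mathcal R_T$ and the trivial case $\lambda=0$, this is exactly \eqref{star-shaped}.

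For the \emph{cone} property when $\alpha=1$ the same computation is even simpler: now $H^\ast$ is the indicator of the unit ball, so $S_T^+u_0(x)=\inf_{|y-x|\le T}u_0(y)$, which commutes with multiplication by any $\lambda\ge0$; hence $\lambda u_T=S_T^+(\lambda u_0)\in\mathcal R_T$ for all $\lambda\ge0$, i.e.\ \eqref{cone def}. That this cone is not convex (as claimed in the introduction) follows from Corollary~\ref{cor:reachability cond example}: one produces $u_1,u_2\in\mathcal R_T$ — for instance two functions with flat bottoms of length $3T$ positioned so that their overlap has length strictly less than $2T$ — whose average has a local minimum sitting on a flat region too short to contain an interval of length $2T$, so the average fails the condition of Corollary~\ref{cor:reachability cond example}.

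For the \emph{convexity} when $\alpha=2$ I would instead invoke Theorem~\ref{thm: reachability condition}. Here $H^\ast(q)=|q|^2/2$, so the test functions in $\mathcal F_T(u_T)$ are the paraboloids $z\mapsto|z-x_0|^2/(2T)+c$, and each such paraboloid differs from $z\mapsto|z|^2/(2T)$ by an affine function, the correspondence being a bijection onto the affine functions. Consequently the condition of Theorem~\ref{thm: reachability condition} at a point $x$ is equivalent to the existence of an affine function lying above $w:=u_T-|\cdot|^2/(2T)$ on all of $\R^N$ and touching it at $x$, and requiring this at every $x$ is equivalent to $w$ being concave. Thus $\mathcal R_T=\{u\in\Lip(\R^N):u-|\cdot|^2/(2T)\ \text{concave}\}$, which is convex because the set of concave functions is convex and $u\mapsto u-|\cdot|^2/(2T)$ is affine; it contains $0$, hence is star-shaped, consistently with the first paragraph, but it is not a cone.

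The routine parts are the exponent bookkeeping and the remark that $\lambda u_0\in\Lip(\R^N)$ with $\Lip(\lambda u_0)=\lambda\Lip(u_0)$. The one genuine obstacle, and the reason a naive scaling argument fails, is that for $\alpha>1$ multiplying a viscosity solution by $\lambda$ turns the Hamiltonian $|p|^\alpha/\alpha$ into $\lambda^{1-\alpha}|p|^\alpha/\alpha$: the trick is to absorb this rescaling into the time variable, which is possible precisely by homogeneity of $H$, and then to erase the unwanted change of horizon through the monotonicity $\mathcal R_{\tilde T}\subset\mathcal R_T$ (itself a consequence of Theorem~\ref{thm: reachability condition}). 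I expect making the inequality $\tilde T\ge T$ point in the correct direction to be the only delicate point.
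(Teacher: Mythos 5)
Your proof is correct. For the central case $\alpha\in(1,\infty)$ it uses exactly the same mechanism as the paper: homogeneity of $H^\ast$ lets you trade the factor $\lambda$ for a change of horizon $T\mapsto\tilde T=T\lambda^{1-\alpha}\geq T$, and the monotonicity $\mathcal{R}_{\tilde T}\subset\mathcal{R}_T$ from Corollary \ref{cor: concave are reachable} finishes the argument; the only difference is that the paper performs the rescaling on the touching functions $\varphi\in\mathcal{F}_T(u_T)$ of Theorem \ref{thm: reachability condition}, working directly on $u_T$, whereas you perform it inside the Hopf--Lax formula via the identity $\lambda S_T^+u_0=S_{\tilde T}^+(\lambda u_0)$, which requires first picking a preimage $u_0$ but is otherwise equivalent (your exponent bookkeeping, $1/(\alpha'-1)=\alpha-1$ and hence $\tilde T\geq T$, is right). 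You diverge more genuinely on the two endpoint cases. For $\alpha=1$ the paper verifies the interior-ball condition of Theorem \ref{thm: HJ abs value intro} through the identity $\Omega_\gamma(\lambda u_T)=\Omega_{\gamma/\lambda}(u_T)$, while you simply observe that $S_T^+u_0(x)=\min_{|y-x|\leq T}u_0(y)$ commutes with multiplication by $\lambda\geq 0$; this is more elementary and yields the cone property in one line (your added sketch of non-convexity via Corollary \ref{cor:reachability cond example} is not required by the statement but is sound). For $\alpha=2$ the paper imports the equivalence of reachability with the semiconcavity bound \eqref{semiconcavity estimate new lions soug} from the smooth uniformly convex theory, whereas you rederive the equivalent description $\mathcal{R}_T=\{u\in\Lip(\R^N):\ u-|\cdot|^2/(2T)\ \text{concave}\}$ internally from Theorem \ref{thm: reachability condition} by identifying the paraboloid family with $|\cdot|^2/(2T)$ plus all affine functions; this keeps the proof self-contained at the cost of a short extra argument that touchability from above by affine functions at every point characterizes concavity.
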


The proof of the corollary is given in subsection \ref{subsec:proof corollaries}.

\subsection{Preliminaries}
\label{subsec: preliminaries}

Let us recall some elementary facts about  viscosity solutions to Hamilton-Jacobi equations of the form \eqref{HJ intro} that are well-known in the literature and will be used throughout our proofs.
Let us recall from \eqref{forward viscosity operator intro} in the introduction that, for any $T>0$, the (forward) viscosity operator $S_T^+$ associates, to any initial condition $u_0$, the viscosity solution to \eqref{HJ intro} at time $t=T$.
It is well-known that the viscosity solution to \eqref{HJ intro} can be given by the so-called Hopf-Lax formula (see for instance \cite{alvarez1999hopf,bardi1984hopf,barles1994solutions}). Then, for any $T>0$, the operator $S_T^+$ can be explicitly defined as
 \begin{equation}\label{HopfLax formula}
 S_T^+ u_0 (x) = \min_{y\in\R^N} \left\{ u_0(y) + T\, H^\ast \left( \dfrac{x-y}{T} \right)  \right\},
 \end{equation}
 where $H^\ast$ is defined as in \eqref{legendre transform}.

The simplest way to characterize the reachable set $\mathcal{R}_T$, which actually applies to more general Hamiltonians of the form $H(x,p)$,  is to perform a backward-forward resolution of \eqref{HJ intro}, 
 by means of the so-called \emph{backward viscosity operator} (see \cite{barron1999regularity})
 $$
 \begin{array}{cccc}
 S_T^- : & \Lip(\R^N) & \longrightarrow & \Lip(\R^N) \\
 & u_T & \longmapsto & S_T^- u_T = w(0,\cdot),
 \end{array}
 $$
 where $w\in \Lip([0,T]\times \R^N)$ is the unique \emph{backward viscosity solution} to \eqref{HJ intro} with terminal condition $u_T$.
We recall that $w\in \Lip([0,T]\times \R^N)$ is a backward viscosity solution to \eqref{HJ intro} if and only if the function $v(t,c) := w(T-t,x)$ is a forward viscosity solution to
 $$
 \partial_t v - H(D_x v) = 0 \qquad \text{in}\ [0,T]\times\R^N.
 $$
 As well as for the forward viscosity solutions, existence, uniqueness and stability of backward viscosity solutions for the terminal value problem associated to the Hamilton-Jacobi equation \eqref{HJ intro} can be proved by means of the vanishing viscosity method, i.e. the backward viscosity solution can be obtained as the limit when $\varepsilon\to 0^+$ of the solution $u_\varepsilon$ to the terminal value problem
 $$
\left\{\begin{array}{ll}
\partial_t u_\varepsilon + \varepsilon \Delta u_\varepsilon + H (D_x u_\varepsilon) = 0 & \text{in} \ (0,T)\times \R^N \\
u_\varepsilon(T, \cdot)= u_T & \text{in} \ \R^N.
\end{array}\right.
$$
 
  Let us now recall the reachability condition for the initial-value problem \eqref{HJ intro} which, for any $T>0$, identifies the reachable targets in time $T$ with the fix points of the composition operator $S_T^+\circ S_T^-$.
Under the assumption of  $H:\R^N\to \R$ being a convex function and $u_T\in \Lip(\R^N)$, we have that
\begin{equation}\label{reachability condition back-forth}
u_T\in \mathcal{R}_T \quad \text{if and only if} \quad S_T^+\circ S_T^- u_T = u_T
\end{equation}
 The proof of \eqref{reachability condition back-forth} is exactly the same as the one of \cite[Theorem 2.1]{esteve2020inverse},  which is a direct consequence of \cite[Proposition 4.7]{esteve2020inverse} (see also \cite{barron1999regularity,misztela2020initial}), and we omit the proof here.

As well as for the forward viscosity solutions,  there is a Hopf-Lax formula for the backward viscosity solutions to \eqref{HJ intro} with terminal condition $u_T\in\Lip(\R^N)$, which reads as
 \begin{equation}\label{HopfLax backward}
 S_T^- u_T (x) = \max_{y\in\R^N} \left\{ u_T(y) - T\, H^\ast \left( \dfrac{y-x}{T} \right)  \right\}.
 \end{equation}

Let us  finish the subsection with the proof of  the following elementary property of $H^\ast$, which will be used in the sequel. 
 
 \begin{lemma}\label{lem: L coercive}
 Let $H:\R^N \to \R$ be a convex function and let $H^\ast$ be its Legendre-Fenchel transform. Then,  for any constant $C>0$,  we have
 $$
 H^\ast(q) \geq C\, |q| - \max_{p\in\overline{B(0,C)}} H(p) \qquad \forall q\in \R^N,
 $$
 where $\overline{B(0,C)}$ is the closure of the ball of radius $C$ centered at the origin.
 \end{lemma}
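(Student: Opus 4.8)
The plan is to unwind the definition of the Legendre--Fenchel transform and test the supremum against a single, cleverly chosen point. Recall that
$$
H^\ast(q) = \sup_{p\in\R^N} \left\{ p\cdot q - H(p) \right\} \geq \sup_{p\in\overline{B(0,C)}} \left\{ p\cdot q - H(p) \right\},
$$
since restricting the supremum to a subset can only decrease it. On the right-hand side, for every admissible $p$ we have $-H(p) \geq -\max_{p'\in\overline{B(0,C)}} H(p')$, so it suffices to choose $p\in\overline{B(0,C)}$ maximizing (or nearly maximizing) the linear term $p\cdot q$.

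First I would dispose of the case $q = 0$: here the claimed inequality reads $H^\ast(0) \geq -\max_{\overline{B(0,C)}} H$, which is immediate from $H^\ast(0) = \sup_p(-H(p)) = -\inf_p H(p)$ and $\inf_p H(p) \leq \max_{\overline{B(0,C)}} H(p)$ (note $\overline{B(0,C)}$ is nonempty). For $q\neq 0$, I would take the explicit competitor $p = C\, q/|q|$, which lies on the boundary of $\overline{B(0,C)}$ and satisfies $p\cdot q = C|q|$. Plugging this into the restricted supremum gives
$$
H^\ast(q) \geq C|q| - H\!\left( C\frac{q}{|q|} \right) \geq C|q| - \max_{p\in\overline{B(0,C)}} H(p),
$$
which is exactly the desired bound. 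The maximum on the right is attained and finite because $H$ is convex, hence continuous, on $\R^N$ and $\overline{B(0,C)}$ is compact.

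There is essentially no obstacle here: the only points worth a line of care are the degenerate case $q=0$ and the remark that $\max_{\overline{B(0,C)}} H$ is a well-defined finite quantity (continuity of $H$ plus compactness of the ball). The statement is really just the observation that $H^\ast$ is bounded below by the support function of the ball $\overline{B(0,C)}$ minus the sup of $H$ on that ball, which in turn yields the coercivity of $H^\ast$ claimed in the name of the lemma, since $C>0$ is arbitrary.
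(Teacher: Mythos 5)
Your proof is correct and follows essentially the same approach as the paper: test the supremum defining $H^\ast$ at the point $p = C\,q/|q|$ and bound $H$ there by its maximum on the compact ball $\overline{B(0,C)}$. Your explicit treatment of the degenerate case $q=0$ is a small point of extra care that the paper's proof glosses over, but the argument is otherwise identical.
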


\begin{proof}
Let $C>0$ be any positive constant.  Since $H$ is convex and takes values in $\R$, we deduce that $H$ is continuous, and then we have
$$
\max_{p\in \overline{B(0,C)}} H(p) <\infty.
$$
Now, using the definition of  $H^\ast$ in \eqref{legendre transform}, for any $q\in \R^N$, we can take $p=C\frac{q}{|q|}\in \overline{B(0,C)}$ and then deduce that
$$
H^\ast(q) \geq C|q| - H \left(  C\dfrac{q}{|q|} \right) \geq 
C|q| - \max_{p\in \overline{B(0,C)}} H(p).
$$
\end{proof}

 \subsection{Proof of Theorems \ref{thm: reachability condition} and  \ref{thm: HJ abs value intro} }
\label{subsec: proof reachability}

We start with the proof of Theorem \ref{thm: reachability condition}.
 
\begin{proof}[Proof of Theorem \ref{thm: reachability condition}]
Let $u_T\in \mathcal{R}_T$ be a reachable target. By \eqref{reachability condition back-forth}, we have that, for all $x\in \R^N$, there exists $x_0\in \R^N$ such that
$$
u_T (x) = S_T^- u_T(x_0) + T\, H^\ast \left( \dfrac{x-x_0}{T} \right).
$$
Using the definition of $S_T^-$ in \eqref{HopfLax backward}, we deduce that 
$$
S_T^- u_T(x_0) \geq u_T (z) - T\, H^\ast \left( \dfrac{z-x_0}{T} \right)
\qquad \forall z\in \R^N.
$$
Hence, by setting $c= S_T^- u_T(x_0)$, we obtain that the function
$$
\varphi  (z) = c + T\, H^\ast \left( \dfrac{z-x_0}{T} \right)
$$  
satisfies $\varphi\in \mathcal{F}_T(u_T)$ and $\varphi(x) = u_T(x)$.

For the reverse implication, let us first prove that,  for any $u_T\in \Lip(\R^N)$, it holds that
\begin{equation}\label{ineq reach cond proof}
u_T (x) \leq  S_T^+\circ S_T^- u_T(x) \qquad \forall x\in \R^N.
\end{equation}
In view of \eqref{HopfLax backward}, we have
$$
S_T^- u_T(y) \geq u_T(x) - T\, H^\ast \left( \dfrac{x-y}{T} \right) \qquad \forall x,y\in \R^N,
$$
which implies that
$$
u_T(x) \leq \min_{y\in \R^N} \left\{ S_T^- u_T(y) + T\, H^\ast \left( \dfrac{x-y}{T} \right)\right\} = S_T^+\circ S_T^- u_T(x) \qquad \forall x\in\R^N. 
$$

Now, let $u_T\in \Lip(\R^N)$ be  such that, for all $x\in \R^N$, there exists $\varphi\in \mathcal{F}_T(u_T)$ satisfying $\varphi(x) = u_T(x)$.  This means that there exist $x_0\in \R^N$ and $c\in \R$ such that
$$
u_T (x) = c + T\, H^\ast\left( \dfrac{x-x_0}{T} \right)
$$
and 
$$
u_T (z) \leq c + T\, H^\ast\left( \dfrac{z-x_0}{T} \right) \qquad \forall z\in \R^N.
$$
This  in particular implies, as a consequence of \eqref{HopfLax backward}, that $c = S_T^-u_T(x_0)$.
Hence,  using \eqref{HopfLax formula} we deduce that
\begin{eqnarray*}
S_T^+ \circ S_T^- u_T(x) & = & \min_{y\in \R^N} \left\{ S_T^- u_T(y) + T\, H^\ast\left(\dfrac{y-x}{T}  \right) \right\} \\
&\leq &  S_T^- u_T(x_0) + T\, H^\ast\left(\dfrac{x_0-x}{T}  \right)  =u_T(x).
\end{eqnarray*}
Combining this inequality with \eqref{ineq reach cond proof} we deduce that $S_T^+\circ S_T^- u_T (x)= u_T(x)$ for all $x\in \R^N$, and then we can use the general reachability criterion  \eqref{reachability condition back-forth} to deduce that $u_T\in \mathcal{R}_T$.
\end{proof}

Let us now prove Theorem \ref{thm: HJ abs value intro} using the conclusion of Theorem \ref{thm: reachability condition}.

\begin{proof}[Proof of Theorem \ref{thm: HJ abs value intro}]
Note first of all that the Legendre-Fenchel transform of $H(p) = |p|$ is given by
\begin{equation}
\label{Lengendre transform abs value}
H^\ast (q) = \left\{
\begin{array}{ll}
0 & \text{if} \ |q| \leq 1 \\
\noalign{\vspace{2pt}}
+\infty & \text{if}\ |q|>1.
\end{array}
\right.
\end{equation}

In view of the form of $H^\ast$, the functions in $\mathcal{F}_T$ defined in the statement of Theorem \ref{thm: reachability condition} are simply functions which are constant in a ball of radius $T$ and infinity elsewhere.
Therefore, the reachability condition from Theorem \ref{thm: reachability condition}, in this case, reads as follows:
\begin{equation}
\label{interior ball proof}
\forall x\in \R^N, \quad \exists x_0 \in \R^N \quad \text{such that} \quad x\in \overline{B(x_0,T)} \quad \text{and} \quad
u_T (y)\leq u_T(x) \quad
 \forall y\in  \overline{B(x_0,T)}.
\end{equation}

It is easy to prove that  this property is equivalent to the interior ball condition from Definition \ref{def: interior ball} with $r=T$.
Let us first assume that \eqref{interior ball proof} holds. Then, for any $\alpha\in \R$ and $x\in \Omega_\alpha (u_T)$, we have that there exists a ball $\overline{B(x_0,T)}$ containing $x$ such that
$$
u_T (y)\leq u_T(x) \leq \alpha, \qquad \forall y\in \overline{B(x_0,T)},
$$
which implies that $\overline{B(x_0,T)}\subset \Omega_\alpha (u_T)$.

On the other hand, if the interior ball condition holds with $r=T$, then for any $x\in \R^N$ we have that $x \in  \Omega_\alpha (u_T)$ with $\alpha = u_T(x)$. Hence, by the interior ball condition, there exists $x_0\in \Omega_\alpha(u_T)$ such that $x\in \overline{B(x_0, T)} \subset \Omega_\alpha (u_T)$, which then implies that
$$
u_T(y) \leq \alpha = u_T(x) \qquad \forall y\in \overline{B(x_0, T)}.
$$
\end{proof}

\subsection{Proof of Corollaries \ref{cor: regularity homogeneous hamiltonian},  \ref{cor: concave are reachable} and \ref{cor: star-shaped}}
\label{subsec:proof corollaries}

We start by proving the regularity result given in Corollary \ref{cor: regularity homogeneous hamiltonian}  for power-like Hamiltonians.

\begin{proof}[Proof of Corollary \ref{cor: regularity homogeneous hamiltonian}]
We start by noticing that, since $H(p) = |p|^\alpha$ for some $\alpha >1$, the its Legendre--Fenchel transform is given by
$$
H^\ast (q) = \frac{\alpha - 1}{\alpha} | q |^{\frac{\alpha}{\alpha-1}}.
$$
Then, a straightforward computation gives the following:
\begin{equation}
\label{grad Hstar}
\nabla H^\ast (q) = |q|^{\frac{2-\alpha}{\alpha -1}} q \qquad \forall q\in \R^N,
\end{equation}
and
\begin{equation}
\label{hessian Hstar estimate}
D^2 H^\ast (q) \leq \frac{1}{\alpha-1} |q |^{\frac{2-\alpha}{\alpha-1}} I_N, \qquad \forall q\in \R^N.
\end{equation}

Now,  from Theorem \ref{thm: reachability condition}, we have that if $u_T\in \mathcal{R}_T$, then for any $x\in \R^N$ there exists a function $\varphi:\R^N\to \R$ of the form
$$
\varphi(z) : = u_T (z) - T \, H^\ast \left( \dfrac{z-x_0}{T}\right) -c
$$
for some $x_0\in \R^N$ and $c\in \R$ such that $\varphi(\cdot)$ attains its maximum at $x$.

This implies that $0\in D^+ \varphi (x)$, which then implies, using \eqref{grad Hstar}, that
\begin{equation}
\label{grad Hstar in subdiff}
\nabla H^\ast \left( \dfrac{x-x_0}{T} \right) = \left| \dfrac{x-x_0}{T}  \right|^{\frac{2-\alpha}{\alpha-1}} \dfrac{x-x_0}{T} \in D^+ u_T(x).
\end{equation}
It then follows that $D^+ u_T(x)\neq \emptyset$ for all $x\in \R^N$.

Let us now prove the semiconcavity inequalities.
Since $\varphi(\cdot)$ attains its maximum at $x$, we have that its Hessian matrix at $x$ is semidefinite negative, i.e. $D^2 \varphi (x)$.
Then, using \eqref{hessian Hstar estimate} we obtain that
\begin{equation}
\label{semiconcav est proof}
\begin{array}{rcl}
D^2 u_T (x) & \leq & \dfrac{1}{T} D^2 H^\ast \left( \dfrac{x-x_0}{T} \right) \\
\noalign{\vspace{2pt}}
& \leq & \dfrac{1}{\alpha-1} \dfrac{|x-x_0|^{\frac{2-\alpha}{\alpha-1}}}{T^{\frac{1}{\alpha-1}}} I_N.
\end{array}
\end{equation}

We now need to use an estimate for the quantity $|x-x_0|$, taking into account that the exponent $\frac{2-\alpha}{\alpha-1}$ has different sign depending whether $\alpha\in (1,2)$ or $\alpha>2$.

If $\alpha\in (1,2)$, we can use \eqref{grad Hstar in subdiff}, and the Lipschitz constant of $u_T$, that we denote by $\Lip(u_T)$, to deduce that
$$
|x-x_0| \leq T \Lip(u_T)^{\alpha-1}.
$$
Note that $u_T\in \Lip(\R^N)$ implies that $|q|\leq \Lip(u_T)$ for all $q\in D^+ u_T(x)$, and for all $x\in \R^N$.

Hence, combining the above inequality with \eqref{semiconcav est proof},  along with the fact that the exponent $\frac{2-\alpha}{\alpha-1}$ is positive, we deduce that
$$
D^2 u_T (x) \leq \dfrac{\Lip(u_T)^{2-\alpha}}{(\alpha -1)T} I_N.
$$

Let us now assume that $\alpha>2$. If we have
$$
\delta_x : = \inf_{q\in D^+ u_T(x)} |q| >0,
$$
we can deduce from \eqref{grad Hstar in subdiff} that
$$
|x-x_0| \geq T\, \delta_x^{\alpha-1}.
$$
Hence,  combining this with \eqref{semiconcav est proof}, and the fact that the exponent $\frac{2-\alpha}{\alpha-1}$ is negative, we deduce that
$$
D^2 u_T(x) \leq \dfrac{1}{(\alpha -1)\delta_x^{\alpha -2} T} I_N.
$$
\end{proof}

Let us now prove Corollary \ref{cor: concave are reachable}.
 
  \begin{proof}[Proof of Corollary \ref{cor: concave are reachable}]
The fact that the reachable set $\mathcal{R}_T$ decreases in time is a direct consequence of the semigroup property of $S_T^+$. Indeed, if $u_T\in \mathcal{R}_T$, then there exists $u_0\in \Lip(\R^N)$ such that $S_T^+ u_0 = u_T$.
Then, for any $T'\in (0,T)$,  consider the initial condition
$$
\tilde{u}_0(x) = S_{T-T'}^{HJ} u_0(x).
$$
By the semigroup property we have that 
$$
S_{T'}^{HJ} \tilde{u}_0 (x) = S_{T'}^{HJ} (S_{T-T'}^{HJ} u_0(x)) = S_T^+ u_0 (x) = u_T(x),
$$
 implying that $u_T\in \mathcal{R}_{T'}$.
This proves that $\mathcal{R}_T \subset \mathcal{R}_{T'}$.
  
Let us now prove the second part of the Corollary.
Let $u_T\in \Lip(\R^N)$ be concave and fix any $T>0$.
 In view of Theorem \ref{thm: reachability condition}, it suffices to prove that, for all $x\in \R^N$, there exists $x_0\in \R^N$ and $c\in \R$ such that 
 \begin{equation}\label{reach cor proof}
 u_T(x) = T\, H^\ast\left(  \dfrac{x-x_0}{T}\right) + c \quad \text{and} \quad
 u_T(z) \leq  T\, H^\ast\left(  \dfrac{z-x_0}{T}\right) + c \quad \forall z\in \R^N.
 \end{equation}
 
 Since $u_T$ is concave, for any $x\in \R^N$, there exists $p_0\in \R^N$  such that 
 \begin{equation}\label{ineq cor aux 1}
 u_T(z) \leq p_0\cdot (z-x) + u_T(x) \qquad \forall z\in \R^N.  
 \end{equation}
 
On the other hand, it is well-known that the convex conjugate of a convex function is convex and lower semi-continuous.
This, combined with the superlinearity of $H^\ast$ proved in Lemma \ref{lem: L coercive}, implies the existence of $q_0\in \R^N$ satisfying
$$
H^\ast(q_0) - p_0\cdot q_0 = \min_{q\in \R^N} \left\{ H^\ast(q) - p_0\cdot q  \right\} > -\infty,
$$
and then we have
\begin{equation}\label{ineq cor aux 2}
H^\ast(q) \geq p_0 \cdot (q-q_0) + H^\ast(q_0) \qquad \forall q\in \R^N.
\end{equation}

Set $x_0 := x-Tq_0$. For any $z\in \R^N$, we can plug $q= \frac{z-x_0}{T}$ into \eqref{ineq cor aux 2} and multiply by $T$ to obtain
$$
T\, H^\ast\left( \dfrac{z-x_0}{T}  \right) \geq p_0 \cdot (z-x) + T\, H^\ast(q_0).
$$

Finally, combining this inequality with \eqref{ineq cor aux 1}, we deduce that
$$
u_T(z) \leq T\, H^\ast \left( \dfrac{z-x_0}{T} \right) - T\, H^\ast(q_0) + u_T(x), \quad  \forall z\in \R^N.
$$
By the choice of $x_0$, we observe that the above inequality is actually an equality for $z =x$. 
Then \eqref{reach cor proof} follows with $c = u_T(x) - T\, H^\ast(q_0)$, and the corollary is proved.
 \end{proof}

We end the section with the proof of Corollary \ref{cor: star-shaped}.

\begin{proof}[Proof of Corollary \ref{cor: star-shaped}]
We start with the cases $\alpha = 2$ and $\alpha =1$.
The case $\alpha = 2$ follows directly from the characterization of the $\mathcal{R}_T$ given by the semiconcavity condition \eqref{semiconcavity estimate new lions soug}, which in this case reads as
$$
D^2 u_T  \leq \dfrac{I_N}{T}, \qquad \text{in the viscosity sense.}
$$
Note that if $u_T$ and $v_T$ both satisfy this inequality, then so does $\lambda u_T + (1-\lambda)v_T$ for all $\lambda\in [0,1]$.

The case $\alpha = 1$ follows from Theorem \ref{thm: HJ abs value intro}.
Indeed, for any $u_T\in \mathcal{R}_T$ and $\lambda>0$ we have that, for all $\gamma\in\R$ the sublevel set $\Omega_\gamma(\lambda u_T)$ is given by
$$
\Omega_\gamma(\lambda u_T) = \{ x\in \R^N \, ; \quad \lambda u_T(x) \leq \gamma \}  = \Omega_{\gamma/\lambda}(u_T),
$$
which satisfies the interior ball condition with radius $r=T$ since $u_T$ is reachable.

Let us now consider $\alpha \in (1,\infty)$. 
For any $u_T\in \mathcal{R}_T$ and $\lambda\in (0,1)$, we shall check that $\lambda u_T$ satisfies the reachability condition from Theorem \ref{thm: reachability condition}.
First of all note that, since $H$ is  of the form \eqref{power-like H},  its Legendre-Fenchel $H^\ast$ transform is given by
$$
H^\ast (q) = \frac{\alpha - 1}{\alpha} |q|^{\frac{\alpha}{\alpha-1}}
$$

Since $u_T\in \mathcal{R}_T$,  for any $x\in \R^N$, there exists $x_0\in \R^N$ and $c\in \R$ such that the function 
$$
\phi (z)  = \frac{\alpha-1}{\alpha} T^{-\frac{1}{\alpha-1}} \left| z-x_0 \right|^{\frac{\alpha}{\alpha-1}} + c
$$
satisfies
$$
\phi (x)  = u_T(x) \qquad \text{and} \qquad \phi(z) \geq u_T(z) \qquad \forall \in \R^N.
$$
Hence, the function $\psi (z) = \lambda \phi(z)$ satisfies
$$
\psi (x)  = \lambda u_T(x) \qquad \text{and} \qquad \psi(z) \geq \lambda u_T(z) \qquad \forall \in \R^N.
$$
Now,  since $\psi(z)$ can be written as
$$
\psi (z) =  \frac{\alpha-1}{\alpha} \left( \dfrac{T}{\lambda^{\alpha-1}}\right)^{-\frac{1}{\alpha-1}} \left| z-x_0 \right|^{\frac{\alpha}{\alpha-1}} + \lambda c,
$$
we deduce,  from Theorem \ref{thm: reachability condition},  that 
$\lambda u_T \in \mathcal{R}_{T'}$ with 
$T' =  \dfrac{T}{\lambda^{\alpha-1}}$. 
Note that $\alpha>1$ and $\lambda\in (0,1)$ imply that $T'>T$.
Finally, since the reachable set is decreasing in time (see Corollary \ref{cor: concave are reachable}), we conclude that $\lambda u_T\in \mathcal{R}_T$.
\end{proof}

\section{Scalar conservation laws}
\label{sec: SCL}

In this section we prove the results given in Theorem \ref{thm: reach charac SCL} and \ref{thm: reachability SCL abs val} concerning the characterization of the reachable set $\mathcal{R}_T^{SCL}$ for the scalar conservation law
\begin{equation}\label{SCL eq}
\left\{\begin{array}{ll}
\partial_t v + \partial_x H(v) = 0 & \text{in} \ (0,T)\times \R \\
v(0, \cdot)= v_0 & \text{in} \ \R.
\end{array}\right.
\end{equation}

\begin{proof}[Proof of Theorem \ref{thm: reach charac SCL}]
The proof consists in checking that condition \eqref{SCL cond} is equivalent to the condition of Theorem \ref{thm: reachability condition} for the function
$$
u_T (x) := 
\displaystyle\int_0^x v_T (y) dy \qquad \forall x \in \R.
$$
Then,  since $\partial_x u_T (x) = v_T(x)$ for a.e. $x\in \R$, we have that $v_T$ is reachable for the equation \eqref{SCL eq} if and only if $u_T$ is reachable for the equation \eqref{HJ intro}.
But, in view of Theorem \ref{thm: reachability condition},
$u_T$ is reachable for \eqref{HJ intro} if and only if, for all $x\in \R$, there exists $x_0$ such that the function 
$$
z \longmapsto\displaystyle\int_0^z v_T (y) dy - T \, H^\ast \left( \dfrac{z-x_0}{T}\right)
$$
has a global maximum at $x$, and the proof is concluded.
\end{proof}

We end the section with the proof of Theorem \ref{thm: reachability SCL abs val} stated in the introduction, which corresponds to the application of Theorem \ref{thm: reach charac SCL} to the case $H(p) = |p|$.

\begin{proof}[Proof of Theorem \ref{thm: reachability SCL abs val}]
First of all, we recall that the Legendre-Fenchel transform of $H(p) = |p|$ is given by the function
\begin{equation}
\label{legendre trans abs val SCL}
H^\ast (q) = \left\{
\begin{array}{ll}
0 & \text{if} \ |q| \leq 1 \\
+\infty & \text{else.}
\end{array}
\right.
\end{equation}

We first prove that \eqref{OSLC abs value} implies \eqref{SCL cond}, and then we will prove the reversed implication.
Let $v_T$ satisfy \eqref{OSLC abs value}.
For any $x\in \R$, define the points
$$
x_1 : = \sup \{ y\in (-\infty, \, x] \ \text{such that} \ v_T (z) \geq 0 \ \text{for a.e.} \ z\in (y,  \, x)  \}
$$
and
$$
x_2 := \inf \{ y\in [x,+\infty) \ \text{such that} \ v_T (z) \leq 0 \ \text{for a.e.}\ z\in (x, \, y) \}.
$$
By the choice of $x_1$ and $x_2$, we have that for any $\varepsilon>0$, the sets 
$$
[x_1-\varepsilon,\, x_1] \cap \{ v_T (y) <0\} \qquad \text{and} \qquad
[x_2,\, x_2+ \varepsilon] \cap \{ v_T(y) >0\}
$$
have both positive measure,
whence, by the assumption \eqref{OSLC abs value},  and letting $\varepsilon\to 0^+$, we deduce that 
\begin{equation}
\label{x_1 x_2 estimate}
x_2 - x_1 \geq 2T.
\end{equation}
Moreover, by the choice of $x_1$ and $x_2$, we have
$v_T(z) \geq 0$ for a.e. $z\in (x_1,x)$ and $v_T(z) \leq 0$ for a.e. $z\in (x,x_2)$.
This implies that the function $g: [x_1,x_2]\to \R$, defined by
\begin{equation}
\label{SCL proof g def}
g(z)  = \int_0^z v_T (y) dy, \qquad \forall z\in [x_1, \, x_2],
\quad \text{has a global maximum at $z = x$.}
\end{equation}
Then, by \eqref{x_1 x_2 estimate}, along with the fact that $x\in (x_1,x_2)$, implies that there exists $x_0\in (x_1,x_2)$ such that
$$
[x_0-T, \, x_0 + T] \subset (x_1,\, x_2) \qquad \text{and} \qquad
x\in [x_0-T, \, x_0 + T]
$$
Finally,  for this choice of $x_0$, and using \eqref{legendre trans abs val SCL}, we obtain that
$$
\int_0^z v_T(y) dy - T\, H^\ast \left( \dfrac{z-x_0}{T}  \right) = 
\left\{
\begin{array}{ll}
g(z) & \text{for} \ z\in [x_0-T, \, x_0 + T] \\
-\infty & \text{else},
\end{array}
\right.
$$
and we can deduce from \eqref{SCL proof g def} that the function 
$$
z\longmapsto \int_0^z v_T(y) dy - T\, H^\ast \left( \dfrac{z-x_0}{T}  \right) 
$$
has a global maximum at $x$.

Let us prove the reversed implication.
Consider a function $v_T\in L^\infty (\R)$ satisfying \eqref{SCL cond}.
For any $x\leq y$, it is obvious that, if $y-x\geq 2T$, then \eqref{OSLC abs value} trivially holds.
It is therefore sufficient to prove that the property \eqref{OSLC abs value} holds in any interval of length $2T$.
Let  $(a,b)\subset \R$ be any interval with $b-a = 2T$, and set
$$
x_1 : =\sup \{ y\in (a,b) \ \text{such that} \ v_T(z) \geq 0 \ \text{for a.e.} \ z\in (a, y)\}.
$$

If $x_1 = b$, then $v_T(x)\geq 0$ for a.e. $x\in (a,b)$, which implies that $v_T(x) >0$ for a.e. $x\in \operatorname{supp}(v_T)\cap (a,b)$, and hence,  property \eqref{OSLC abs value} holds in $(a,b)$.

If we have $x_1\in [a,b)$,  by the definition of $x_1$, it holds that, for any $\varepsilon>0$, there exists 
$x_\varepsilon\in (x_1, x_1+\varepsilon]$ such that
$$
 \int_{x_1}^{x_\varepsilon} v_T(y) dy <0,
$$
which implies that the function 
\begin{equation}\label{g def}
y\longmapsto g(y) = \int_0^y v_T(z) dz
\end{equation}
satisfies $g(x_1) > g(x_\varepsilon)$.
Using the assumption \eqref{SCL cond},  together with the particular form of $H^\ast$ in \eqref{legendre trans abs val SCL}, we have that 
\begin{equation}\label{proof abs val max}
\forall x\in \R,  \quad
\exists x_0 := x_0 (x) \in \R \quad  \text{s.t.}  \quad |x-x_0|\leq T \  \text{and}  \ g(x)\geq g(y) \  \forall y\in [x_0-T,x_0+T].
\end{equation} 

In particular, applying this property to $x_\varepsilon$, and the fact that $b-a=2T$, we have that $g(y)\leq g(x_1)$ for all $y\in [x_1, b]$.

We can now deduce that  $v_T(y) \leq 0$ for a.e.  $y\in (x_1,b)$.
This is indeed equivalent to prove that the function $g(x)$ defined in \eqref{g def}
is nonincreasing in $[x_1, b]$.
Assume for a contradiction that 
$$
\exists z_1, z_2\in [x_0,b] \quad \text{with} \ z_1 <z_2 \ \text{and} \ g(z_2) > g(z_1).
$$
Then we have $g(z_1) < g(z_2) \leq g(x_0)$, which together with $z_1 \in (x_0, z_2) \subset (a,b)$ leads to a contradiction with the statement \eqref{proof abs val max}.
We have then proved that $v_T(x) \geq 0$ for a.e. $x\in (a,x_1)$ and  $v_T(x)\leq 0$ for a.e.  $x\in (x_1,b)$, which implies that \eqref{OSLC abs value} holds in $(a,b)$.

\end{proof}

\section{Conclusions and open questions}
\label{sec: conclusions}

In this work we studied the range of the operator that associates, to any initial condition, the solution at time $T$ of nonlinear first-order partial differential equations such as Hamilton-Jacobi equations and scalar conservation laws.
In the case when the Hamiltonian (resp. the flux) is smooth and uniformly convex, the range of this operator is well-understood, and can be characterized by means of semiconcavity estimates for Hamilton-Jacobi equations, and by one-sided Lipschitz condition for scalar conservation laws.
Our goal in this work was to extend this results to the more general case when the Hamiltonian is not necessarily smooth nor strictly convex, and is merely assumed to be a convex function. Note that in this case, semiconcavity estimates are not available.

Our characterization of the reachable set for Hamilton-Jacobi equations relies on the use of the Hopf-Lax formula for the viscosity solution.
This result is then adapted to the case of scalar conservation laws in one space dimension by using the link between both equations.

In the particular case of Hamilton-Jacobi equations with $H(p) = |p|$, we give a rather geometrical description of the reachable set by means of an interior ball condition on the sublevel sets of the target, which yields a one-sided regularity estimate for the boundary of the sublevel sets.  

Finally, we use our main results to deduce several structural properties of the reachable set.  For instance, we can prove that for power-like Hamiltonians of the form $H(p) = |p|^\alpha$, with $\alpha \geq 1$, the reachable set is star-shaped with center at the origin. Moreover,  if $\alpha = 2$, the reachable set is convex,  and if  $\alpha = 1$, then it consists of a (non-convex) cone.

\textbf{Open questions.} 
Let us conclude the paper with two questions that we were not able to answer, and might be addressed in forthcoming works.

\begin{enumerate}
\item We proved that for the case of Hamilton-Jacobi equations with power-like Hamiltonian, the reachable set is star-shaped, with center at the origin. Although it seems reasonable that the same property should hold for the case of general convex Hamiltonians, we were not able to provide a rigorous proof.
\item Concerning the same star-shaped property for the reachable set, we proved that the origin is a center of the domain, however,  we cannot confirm whether or not other function than zero could be centers of this star-shaped set, i.e. a function $u_T^\ast \in \mathcal{R}_T$ such that
$$
\forall u_T\in \mathcal{R}_T, \ \text{and} \ \forall \lambda\in [0,1], \qquad
 \lambda u_T + (1-\lambda) u_T^\ast \in \mathcal{R}_T.
$$
For instance, the set of concave functions is a convex set contained in the reachable set,  which makes it a good candidate to find other centers.
\end{enumerate}

\bibliographystyle{abbrv}
\bibliography{mybibfile-HJ}

\end{document}